\newcounter{eq}
\edef\section{\noexpand\setcounter{eq}{0}\the\toks0}
\numberwithin{equation}{section}
\theoremstyle{plain}
\newtheorem{theorem}{Theorem}
\newtheorem{lemma}{Lemma}
\newtheorem{corollary}{Corollary}
\newtheorem{proposition}{Proposition}
\theoremstyle{definition}
\newtheorem{definition}{Definition}
\newtheorem{remark}{Remark}
\newtheorem{notation}{Notation}
\title{Cohomology and deformations of Jacobi-Jordan algebras}
\author{ Amir Baklouti $^1$, Sa\"{i}d Benayadi $^2$,\\ Abdenacer Makhlouf $^3$, Sabeur Mansour $^4$, \\ \footnotesize $^{1,4}$Umm Al-Qura University, College of first Common year, Department of mathematics, P.O. Box 14035, \\ \footnotesize  Holly Makkah 21955, Saudi Arabia\\\footnotesize 
$^2$ Universit\'e de Lorraine, Laboratoire IECL, CNRS UMR 7502, UFR MIM, 3 rue Augustin Frenel BP 45112,\\ \footnotesize F-57112  Metz
Cedex 03, France\\\footnotesize
$^3$ Universit\'e de Haute Alsace, IRIMAS - D\'epartement de Math\'ematiques, 6 rue des fr\`eres Lumi\`ere \\ \footnotesize F-68093 Mulhouse, France}
\begin{document}
\newtheorem{Example}{Example}
\title{Cohomology and deformations of Jacobi-Jordan algebras}

\maketitle

%
%

\abstract
    In this paper, we define and develop a cohomology and deformation theories of Jacobi-Jordan algebras. We construct a cohomology based on two operators, called zigzag cohomology,  and detail the low degree cohomology spaces. We describe the relationships between  first and second cohomology groups  with extensions and deformations. Moreover, we consider cohomology properties of pseudo-euclidean Jacobi-Jordan algebras and  provide a deformation theory that fits with our zigzag cohomology of Jacobi-Jordan algebras. Furthermore, the paper  includes several examples and applications.
    
\keywords Jacobi-Jordan algebra, Cohomology, Deformation, Extension.\\
\begin{small}MSC[2020] {17C50,16W10,17B56}\end{small}

\section{Introduction}
Cohomology theories  of associative and nonassociative algebras have been extensively studied.  In the context of Jordan algebras the results are relatively scarce. Most of the known results focus on the theory of Lie algebras, see  \cite{fialowski,fucks,harisson,neeb,vagmann}. In this paper we consider 
a class of commutative non-associative algebras that satisfy
the Jacobi identity instead of associativity, which are called Jacobi-Jordan algebras. It turns out that they are a special class of Jordan
algebras. The set of these algebras
is a subclass of a class of Jordan-Lie superalgebras introduced in \cite{kamiya}. For more details about these algebras see \cite{amir2} and \cite{burde}.

Our main upshot in this work is developing a cohomology and deformation theory for Jacobi-Jordan algebras
analogous to the existing theories for associative and Lie algebras. Notice that there is no general cohomology for Jordan algebras, even though there is some attempts, see \cite{Chu,MDV,Glassman,Glassman2}. 
However,
we introduce a  cohomology that has several properties not enjoyed by the Hochschild theory. 
The complex provided here is defined by  two sequences of operators  $d^i$ and $\delta^i$ which satisfy $d^p\delta^{p-1}=0$ for any integer $p$. We denote by $ ~_zZ^p
({\mathfrak  J}, M) $ the kernel of $d$ and by $ ~_zB^p
({\mathfrak  J}, M) $ the image of $\delta$ and by $~_zH^p(\mathfrak  J,M) := ~_zZ^p(\mathfrak  J,M) /~_zB^p(\mathfrak  J,M)$ the $p^{\rm th}$ cohomology space of $\mathfrak  J$ with values/coefficients in Jacobi-Jordan $\mathfrak J$-module $ M$.
We obtain that 
$~_zH^0(\mathfrak  J,M)$ is the annihilator of $\mathfrak  J$ and $~_zH^1(\mathfrak  J,M)$ corresponds to derivations. Moreover, the second group of cohomology classifies the extension of $\mathfrak  J$ by a module $M$. Furthermore, we show that this cohomology controls one parameter formal deformations. We also explore  cohomology properties of pseudo-euclidean Jacobi-Jordan algebras. 

The outline of the paper is as follows: In the second section we give some basic notions and concepts used through the paper, namely some properties of Jacobi-Jordan algebras and their representations. In Section \ref{Sec3}, we introduce a cohomology for Jacobi-Jordan algebras. It is called zigzag cohomology since it deals with two types of cochains and two sequences of operators. We explore the low degree cohomology spaces and extensions.  Moreover, explicit computation on examples are provided. 
in Section \ref{Sec4}, we study scalar zigzag cohomology of pseudo-Euclidean Jacobi-Jordan algebras, which are Jacobi-Jordan algebras endowed with an associative nondegenerate,  symmetric bilinear forms. We characterize the second cohomology group in terms of derivations and antiderivations. Moreover, we provide an exact sequence involving the third cohomology group, from which we deduce some dimension results. 
In Section \ref{Sec5}, we study one-parameter formal deformations of Jacobi-Jordan algebras and show that the zigzag cohomology fits perfectly and provide the expected results, that is the infinitesimal deformations are in one-to-one correspondence with the elements of the second zigzag cohomology group. Section \ref{Sec6} deals with deformations of Jacobi-Jordan algebra homomorphisms with similar results.  We prove that infinitesimal deformations of a Jacobi-Jordan  algebra
homomorphism from $\mathfrak  J$  into an admissible Jacobi-Jordan algebra $M$ are classified by the first cohomology group $~_zH^1({\mathfrak  J},A)$. The last section is devoted to an explicit computation of   the second zigzag cohomology group of a given 4-dimensional  Jacobi-Jordan algebra.  Moreover, we provide an example of deformation of the considered Jacobi-Jordan algebra and  show  that this deformation is isomorphic to another Jacobi-Jordan algebra which is  given in the 4-dimensional  classification, see  \cite{burde}.
\section{Preliminaries}\label{sec2}
In this section, we introduce some  notations  and  develop some necessary tools. For general results about Jacobi-Jordan algebras and their  applications see for instance \cite{amir2} and \cite{burde}.

In the sequel, we work over a field $\mathbb{K}$ of characteristic 0. We refer to an nonassociative algebra or simply an algebra as a pair $({\mathcal A},.)$ consisting of a $\mathbb{K}$-vector space $\mathcal A$ and a bilinear map ${\mathcal A}\otimes {\mathcal A}\rightarrow {\mathcal A};\,\, (x,y)\mapsto xy$.

\begin{definition}A Jacobi-Jordan algebra is a pair $({\mathfrak J},\cdot)$ consisting of a vector space $\mathfrak J$ and a bilinear map ${\mathfrak J}\otimes {\mathfrak J}\rightarrow {\mathfrak J};\,\, (x,y)\mapsto xy,$ satisfying the identities
\begin{eqnarray*}
	&(i)&  xy= yx,\\
	&(ii)& J(x,y,z):=x(yz)+y(zx)+z(xy)=0,\,\,   \forall\,  x,y,z \in {\mathfrak J}.
\end{eqnarray*}
	\end{definition}


	\begin{Example}\label{ex2}
Let  ${\mathfrak  J}_4=( \mathbb{K}^4,\cdot )$ be the Jacobi-Jordan algebra defined and denoted by $A_{1,4}$  in  \cite[Proposition 3.4]{burde}.  It is defined with respect to a basis $\{ e_1,e_2,e_3,e_4\} $ by 
$e_1 \cdot e_1=e_2,\;  e_1 \cdot e_3=e_4
$. In the sequel we refer to this Jacobi-Jordan algebra and its under underlying vector space by ${\mathfrak  J}_4$ and use concatenation for the product. 
This example will be used in the sequel to  illustrate the low degree  cohomology of  Jacobi-Jordan algebras and we refer to it by ${\mathfrak  J}_4$.
\end{Example}

	\begin{definition}
	Let ${\mathfrak J}$ be an algebra. We consider the new product $\{.~,.\}$ defined on the vector space ${\mathfrak J}$ by:
	$$\{x, y\}:= xy + yx, \,\,\, \forall\, x, y, z \in {\mathfrak  J}.$$
	
	The algebra ${\mathfrak J}$ is called  Jacobi-Jordan admissible algebra if $({\mathfrak  J},\{.~,.\})$ is a Jacobi-Jordan algebra. 
\end{definition}

	\begin{definition}
	Let $({\mathfrak J},.)$ be a nonassociative algebra
The anti-associator of this algebra is the trilinear map
		$\mbox{Aasso}: {\mathfrak J}\times {\mathfrak J}\times {\mathfrak J} \rightarrow {\mathfrak J}$
		defined by: $\mbox{Aasso}(x,y,z):= (x\cdot y)\cdot z+x\cdot (y\cdot z),\,\,\, \forall\, x, y, z \in {\mathfrak J}.$
		
	We say that $({\mathfrak J},.)$ is  an anti-associative algebra if $\mbox{Aasso}(x,y,z)= 0,\,\, \forall \, x, y, z \in {\mathfrak J}$.
\end{definition}
	
	\begin{proposition}
	Let $({\mathfrak J},\cdot )$ be an anti-associative algebra. Define a new  product $\circ$ in ${\mathfrak J}$  by $x\circ y:= x\cdot y + y\cdot x, \,\,\, \forall\, x, y, z \in {\mathfrak J}.$ Then  $({\mathfrak J},\circ)$ is a Jacobi-Jordan algebra. Hence anti-associative algebras are  admissible Jacobi-Jordan algebras.
\end{proposition}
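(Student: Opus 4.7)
The proposition has two requirements to check for $(\mathfrak{J},\circ)$: commutativity and the Jacobi identity. My plan is to dispatch these in order, with the bulk of the work being a bookkeeping calculation for the Jacobi identity.

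Commutativity of $\circ$ is essentially built into the definition: $x\circ y = x\cdot y + y\cdot x = y\cdot x + x\cdot y = y\circ x$, so there is nothing to do.

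For the Jacobi identity, the plan is to expand each of the three cyclic terms $x\circ(y\circ z)$, $y\circ(z\circ x)$, $z\circ(x\circ y)$ directly from the definition. Each expansion produces four products of the form $(a\cdot b)\cdot c$ or $a\cdot(b\cdot c)$, giving twelve terms in total for $J(x,y,z)$. The key reduction step is to apply the anti-associativity hypothesis in the form $(a\cdot b)\cdot c = -\, a\cdot(b\cdot c)$ to rewrite every left-associated product as a right-associated product (with a sign). After this normalization each of the twelve terms has the shape $\pm a\cdot(b\cdot c)$ with $\{a,b,c\}=\{x,y,z\}$, and the six possible permutations then each appear exactly twice with opposite signs, causing complete cancellation.

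The only real obstacle is the accounting: one must be careful with the sign coming from anti-associativity and with the order of the arguments within each inner product, since $x\cdot(y\cdot z)$ and $x\cdot(z\cdot y)$ are distinct terms at this stage (anti-associative algebras are not assumed commutative). Tracking the four terms produced by each cyclic factor and confirming the pairwise cancellation is routine but needs to be done methodically. The final conclusion, that anti-associative algebras are admissible Jacobi-Jordan algebras in the sense of the preceding definition, then follows immediately from the two verifications.
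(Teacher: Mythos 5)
Your proposal is correct and follows the same route as the paper, which simply asserts ``Straightforward computation'': expanding the three cyclic terms, using $(a\cdot b)\cdot c=-a\cdot(b\cdot c)$ to normalize, and observing that each of the six right-associated products $a\cdot(b\cdot c)$ appears once with each sign does yield complete cancellation. Your outline supplies exactly the bookkeeping the paper omits.
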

Straightforward computation. 	
	
\begin{definition}	
	A nonassociative algebra $({\mathfrak J},\cdot )$ 
 is a Jordan algebra if it is commutative and if \begin{align}\label{jordan}   
x\cdot (x^2\cdot y)= x^2\cdot (x\cdot y), \, \forall x,y \in {\mathfrak J}.
 \end{align}
\end{definition}
\begin{remark}\label{jordan1}
It is shown in \cite{schafer} that  equation (\ref{jordan}) is equivalent to the following equation:
$$x((yz)t) + y((zx)t) + z((xy)t) = (xy)(zt) + (yz)(xt) + (zx)(yt),$$ for all $x,y,z\in {\mathfrak J}$.
\end{remark}
 For every elements $a_1,\dots,a_n$ of a commutative nonassociative algebra ${\mathfrak J}$, (where $n\in {\mathbb N}^*)$, and for $i \in \{1,\dots,n\},$ we define $a^i:= a_1\,$ if $i=1$ and $a^i:= a^{i-1}\cdot a_i\,$ if $i\geq 2.$ We note $a^i=: p_i(a_1,\dots,a_i).$  If $a_1= \dots = a_n= a$, $a^n:= p_n(a).$ An element $a$ of ${\mathfrak J}$ is called nilpotent if there exists $n\in {\mathbb N}^*$ such that $a^n= 0.$
For $n\in {\mathbb N}^*$,  ${\mathfrak J}^n$ is the set of all finite sums of product $p_n(a_1,\dots,a_n)$ of $n$ elements in  ${\mathfrak J}$. The algebra $({\mathfrak J},\cdot )$ is called nilpotent if there exists $n\in {\mathbb N}^*$ such that ${\mathfrak J}^n= \{0\}$.
If all elements of $({\mathfrak J},\cdot )$ are nilpotent, $({\mathfrak J},\cdot  )$ is called a nilalgebra.
Recall that A. A. Albert proved that all finite dimensional Jordan nilalgebras over a field of characteristic $\not= 2$ are nilpotent (see \cite{schafer}, Theorem 4.3 page 96).

Now, we recall an interesting lemma (Lemma 1 in \cite{RB}).

\begin{lemma}
Let $({\mathfrak J},\cdot )$ be a commutative 	algebra over a field of characteristic $\not= 2$. If $x^3= 0,\, \forall x \in {\mathfrak J},$ then $({\mathfrak J},\cdot )$ is a Jordan algebra.
\end{lemma}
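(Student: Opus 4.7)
The plan is a straightforward linearization argument. Starting from the hypothesis $z^3 = 0$ for every $z \in {\mathfrak J}$, I would substitute $z = x + t y$ with $t \in \mathbb{K}$, $x,y \in {\mathfrak J}$, and expand. Using commutativity to get $(x+ty)^2 = x^2 + 2t(xy) + t^2 y^2$, a direct computation yields
\[
(x+ty)^3 = x^3 + t\bigl(x^2 y + 2(xy)x\bigr) + t^2\bigl(2(xy)y + y^2 x\bigr) + t^3 y^3.
\]
Since $\mathbb{K}$ has characteristic zero and is therefore infinite, the vanishing of this polynomial in $t$ for all $t$ forces each coefficient to vanish. The coefficients of $t^0$ and $t^3$ are $x^3$ and $y^3$, which are zero by hypothesis. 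Extracting the coefficient of $t$ and using commutativity once more to rewrite $(xy)x$ as $x(xy)$ produces the partial linearization
\[
(\ast)\qquad x^2 y + 2\, x(xy) = 0, \quad \text{equivalently}\quad x(xy) = -\tfrac{1}{2}\, x^2 y.
\]

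From $(\ast)$ the Jordan identity \eqref{jordan} follows by applying the identity twice. First, substituting $xy$ in place of $y$ in $(\ast)$ gives
\[
x\bigl(x(xy)\bigr) = -\tfrac{1}{2}\, x^2(xy).
\]
Second, rewriting the inner factor $x(xy)$ on the left-hand side via $(\ast)$ itself gives
\[
x\bigl(x(xy)\bigr) = x\bigl(-\tfrac{1}{2}\, x^2 y\bigr) = -\tfrac{1}{2}\, x(x^2 y).
\]
Equating the two expressions and cancelling the common factor $-\tfrac{1}{2}$ yields $x(x^2 y) = x^2(xy)$, which is exactly \eqref{jordan}.

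There is no serious obstacle here. The only subtle point is the legitimacy of the linearization, which requires both $\mathrm{char}\,\mathbb{K} \neq 2$ (so that the factor $\tfrac{1}{2}$ makes sense) and that $\mathbb{K}$ be infinite (so that a polynomial in $t$ which vanishes identically on $\mathbb{K}$ has zero coefficients). Both conditions are guaranteed by the standing assumption $\mathrm{char}\,\mathbb{K} = 0$ of the paper, so the argument goes through without additional machinery beyond commutativity and the inductive definition of $x^2$ and $x^3$.
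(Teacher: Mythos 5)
Your proof is correct. Note that the paper supplies no proof of this lemma at all: it is simply recalled as Lemma~1 of \cite{RB}, so there is no internal argument to compare against; your linearization is the standard (and surely the intended) one. The expansion of $(x+ty)^3$ is right, the degree-one coefficient gives $x^2y+2x(xy)=0$, and the two-step use of the resulting identity $x(xy)=-\tfrac{1}{2}x^2y$ (first substituting $xy$ for $y$, then rewriting the inner factor) cleanly yields $x(x^2y)=x^2(xy)$, which together with the assumed commutativity is exactly the definition of a Jordan algebra used in the paper. One small refinement: you do not actually need $\mathbb{K}$ to be infinite. Evaluating the cubic in $t$ at $t=1$ and $t=-1$ and adding and subtracting the two relations isolates the odd- and even-degree parts, and the only division required is by $2$; hence the lemma holds in the generality in which it is stated, namely $\mathrm{char}\,\mathbb{K}\neq 2$, not merely under the paper's standing assumption of characteristic zero.
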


Remark that if $({\mathfrak J},.)$ is a Jacobi-Jordan algebra, then $x^3= 0,\,\, \forall \, x\in {\mathfrak J}$. Using this lemma and the Albert's result above, the following proposition was  proved in \cite{amir2}:
\begin{proposition}\label{NIL} Every Jacobi-Jordan algebra $({\mathfrak J},.)$ is a nilpotent Jordan algebra such that $x^3= 0,\,\, \forall \, x\in {\mathfrak J}.$
\end{proposition}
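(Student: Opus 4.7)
The plan is to read off all three conclusions from identities already available in the excerpt, chaining: Jacobi identity $\Rightarrow$ $x^3=0$ $\Rightarrow$ Jordan $\Rightarrow$ nilpotent.

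First, I would specialize the Jacobi identity $(ii)$ at $x=y=z$. Commutativity gives $xx = x^2$, and the three cyclic terms collapse to a single one:
\begin{equation*}
J(x,x,x) = x(xx) + x(xx) + x(xx) = 3\,x \cdot x^2 = 3\,x^3.
\end{equation*}
Since the characteristic of $\mathbb{K}$ is $0$, this forces $x^3 = 0$ for every $x \in \mathfrak{J}$. In particular, every element is nilpotent, so $(\mathfrak{J},\cdot)$ is a nilalgebra.

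Next, I would invoke the lemma recalled just before the statement: any commutative algebra over a field of characteristic $\neq 2$ in which $x^3 = 0$ for all $x$ is automatically a Jordan algebra. Since $(\mathfrak{J},\cdot)$ is commutative by axiom $(i)$ and satisfies $x^3=0$ by the step above, the lemma applies and yields that $(\mathfrak{J},\cdot)$ is a Jordan algebra.

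Finally, to promote ``nilalgebra'' to ``nilpotent'', I would appeal to Albert's theorem cited from Schafer (Theorem 4.3, p.~96): every finite-dimensional Jordan nilalgebra over a field of characteristic $\neq 2$ is nilpotent. Combining this with the Jordan structure just obtained, there exists $n \in \mathbb{N}^*$ such that $\mathfrak{J}^n = \{0\}$, completing the proof.

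The only genuine subtlety is the tacit finite-dimensionality hypothesis needed to apply Albert's theorem, since the Jordan axiom and $x^3=0$ alone do not force global nilpotence in infinite dimension; I would flag this and assume, in line with the standing convention of the paper, that $\mathfrak{J}$ is finite-dimensional. Otherwise, all three steps are direct substitutions and citations, with no computational obstacle.
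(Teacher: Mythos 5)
Your proof follows exactly the route the paper indicates: specialize the Jacobi identity at $x=y=z$ to get $3x^3=0$, hence $x^3=0$ in characteristic $0$; apply the recalled lemma (commutative plus $x^3=0$ implies Jordan); then invoke Albert's theorem on Jordan nilalgebras to conclude nilpotence. Your remark about the tacit finite-dimensionality needed for Albert's theorem is a fair and worthwhile caveat, but otherwise the argument coincides with the one the paper attributes to \cite{amir2}.
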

In ($\cite{burde},$ Lemma $2.4$), we found a nice characterization of Jacobi-Jordan algebras: $({\mathfrak J},.)$ is a Jacobi-Jordan algebra if and only if $({\mathfrak J},.)$ is a commutative algebra 
 such that  $x^3= 0,\,\, \forall \, x\in {\mathfrak J}.$

Now, we shall define modules and representations of a Jacobi-Jordan algebra as in \cite{agore}. These definitions can be found also in \cite{jac} for Jordan algebras. 

\begin{definition} Let $\mathfrak J$ be a Jacobi-Jordan algebra. A (left) Jacobi-Jordan $\mathfrak J$-module (Jacobi-Jordan $\mathfrak J$-module,
for short) is a vector space $M$ equipped with a bilinear map $\star : \mathfrak J\times M \longrightarrow M $, called action,
such that for any $x,y \in  \mathfrak J$ and $a \in M$ :
\begin{align}\label{module}
    (xy) \star a = -x\star (y\star a)-y\star (x\star a).
    \end{align}
\end{definition}

\begin{definition}\label{def2}
	Let	$({\mathfrak  J},.)$ be a Jacobi-Jordan algebra and  $M$  be a vector space. A linear map 
		$\pi:{\mathfrak  J}\longrightarrow End(M)$  is said to be a representation of the Jacobi-Jordan algebra if
			\begin{equation} \label{eq-0}
		\begin{array}{rll}
		&&\pi(xy)=-\pi(x)\pi(y)-\pi(y)\pi(x),
		\forall x,y \in {\mathfrak  J}.
		\end{array}
		\end{equation}

\end{definition}	
	
	\begin{remark}
	If we view the Jacobi-Jordan algebra as a Jordan algebra, one may use the definition of representations introduced by Jacobson. It turns out that if we replace $\pi$ by $-\pi$ we get Jacobson's representations satisfying the identity 
		\begin{equation} 
		\begin{array}{rll}
		&&\pi(xy)=\pi(x)\pi(y)+\pi(y)\pi(x),
		\forall x,y \in {\mathfrak  J}.
		\end{array}
		\end{equation}
	Notice that representations were also considered in \cite{agore,amir2,MDV}.
	\end{remark}
	Representations of a Jacobi-Jordan algebra ${\mathfrak  J}$ and Jacobi-Jordan ${\mathfrak  J}$-modules are two different ways of
describing the same structure: more precisely, there exists an isomorphism of categories between the category of the ${\mathfrak  J}$-modules and the category of representations of ${\mathfrak  J}$.

 The one-to-one correspondence between Jacobi-Jordan ${\mathfrak  J}$-module structures $\star$ on $M$ and
representations $\pi$ of ${\mathfrak  J}$ on $M$ is given by the two-sided formula $\pi(x)(a) := x\star a$, for
all $x \in {\mathfrak  J}$ and $a\in M$ .
Furthermore, it was proved in 	\cite{amir2} that	
		 $(\pi,M)$ is a representation of ${\mathfrak  J}$ if and only if the vector space
		${\mathfrak  J}= {\mathfrak  J}\oplus M$ endowed with the following
		product:
		\begin{eqnarray*}
			(x+v)(y+w)=xy+\pi(x)w+\pi(y)v, \,  \, \, \forall x,y \in
		{\mathfrak  J}, v,w \in M,
		\end{eqnarray*}
		is a Jacobi-Jordan algebra.

\begin{definition}\label{deriv}
	Let ${\mathfrak  J}$ be  an algebra and let
	 $D\in \mbox{Hom}({\mathfrak  J},{\mathfrak  J})$.
	\begin{enumerate}
		\item  $D$ is called  derivation of the algebra  $({\mathfrak  J},\cdot )$ if
		
		$$D(xy)= D(x)y + xD(y),\,\,\, \forall\, x, y \in {\mathfrak  J}.$$

		\item  $D$ is called  anti-derivation of the algebra  $({\mathfrak  J},\cdot )$ if
		
		$$D(xy)= -D(x)y - xD(y),\,\,\, \forall\, x, y \in {\mathfrak  J}.$$
		
	\end{enumerate}

	Recall that $\mbox{Hom}({\mathfrak  J},{\mathfrak  J})$ endowed with the commutator is a Lie algebra denoted by ${\mathfrak g}l({\mathfrak  J}).$ Denote by $\mbox{Der}({\mathfrak  J})$ the vector space of derivations of  the algebra  $({\mathfrak  J},\cdot )$ and $\mbox{ADer}({\mathfrak  J})$ the vector space of anti-derivations of the algebra  $({\mathfrak  J},\cdot )$. 

\end{definition}
Recall the following result  \cite[Remark 2.2]{amir2}.

		\begin{proposition}\label{anticomm}
		
		Let ${\mathfrak  J}$ be a Jacobi-Jordan algebra.
\begin{enumerate}		
\item For all $x \in {\mathfrak  J}, \,  L_x:{\mathfrak  J}\rightarrow{\mathfrak  J}$, defined by $L_x(y):=xy,\,\,\ \forall y\in {\mathfrak  J},$ belongs to $\mbox{ADer}({\mathfrak  J}).$
				
\item  If $D, D'\in  \mbox{ADer}({\mathfrak  J})$,  the anti-commutator $\{D,D'\}:= DD'+D'D$, of $D$ and $D'$, is an anti-derivation of $({\mathfrak  J},.)$ if and only if $$\{D,D'\}(xy)= D(x)D'(y) + D'(x)D(y), \,\, \forall \, x,y \in {\mathfrak  J}.$$

\end{enumerate}			
\end{proposition}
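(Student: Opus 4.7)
For part (1), the plan is to unpack the definition of anti-derivation for $L_x$ and verify it directly against the Jacobi identity. Specifically, checking that $L_x$ is an anti-derivation amounts to showing $x(yz)=-(xy)z-y(xz)$ for all $y,z\in{\mathfrak J}$. The Jacobi identity $J(x,y,z)=x(yz)+y(zx)+z(xy)=0$ already supplies $x(yz)=-y(zx)-z(xy)$, and a single application of commutativity rewrites $y(zx)=y(xz)$ and $z(xy)=(xy)z$, which gives exactly the anti-Leibniz rule. So Part (1) is essentially just a restatement of Jacobi together with commutativity.

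For part (2), the plan is to expand $\{D,D'\}(xy)=DD'(xy)+D'D(xy)$ by applying the anti-derivation rule twice. Writing $D'(xy)=-D'(x)y-xD'(y)$ and then hitting both terms with $D$ (again using that $D$ is an anti-derivation), I will get
\begin{equation*}
DD'(xy)=DD'(x)\,y+D'(x)D(y)+D(x)D'(y)+x\,DD'(y),
\end{equation*}
and a symmetric expression for $D'D(xy)$ by swapping $D\leftrightarrow D'$. Adding the two identities (note the two mixed terms $D(x)D'(y)+D'(x)D(y)$ appear with coefficient $2$) yields
\begin{equation*}
\{D,D'\}(xy)=\{D,D'\}(x)\,y+x\,\{D,D'\}(y)+2\bigl(D(x)D'(y)+D'(x)D(y)\bigr).
\end{equation*}

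With this master identity in hand, the equivalence is immediate algebra. The anti-derivation condition demands $\{D,D'\}(xy)=-\{D,D'\}(x)y-x\{D,D'\}(y)$; substituting into the displayed identity and solving shows that this is equivalent to $\{D,D'\}(xy)=D(x)D'(y)+D'(x)D(y)$, which is the stated criterion. Both implications follow from the same linear manipulation.

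There is no serious obstacle here; the only thing to be careful about is tracking signs correctly when composing two anti-derivations (each application flips a sign, but the two terms of the Leibniz-type expansion each pick up one sign, so the "mixed" cross terms $D(x)D'(y)$ and $D'(x)D(y)$ end up with a $+$ sign, which is what makes the factor of $2$ appear). Aside from that, Part (1) is one line of Jacobi plus commutativity, and Part (2) is a bookkeeping computation ending in a trivial equivalence.
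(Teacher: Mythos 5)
Your proof is correct: part (1) is indeed just the Jacobi identity rewritten using commutativity, and your master identity $\{D,D'\}(xy)=\{D,D'\}(x)y+x\{D,D'\}(y)+2\bigl(D(x)D'(y)+D'(x)D(y)\bigr)$ checks out (each of $DD'(xy)$ and $D'D(xy)$ expands with all plus signs because the two sign flips cancel), after which condition (A) $\{D,D'\}(xy)=-\{D,D'\}(x)y-x\{D,D'\}(y)$ and condition (B) $\{D,D'\}(xy)=D(x)D'(y)+D'(x)D(y)$ are each equivalent to the other by substituting the master identity and dividing by $2$. The paper gives no proof of this proposition — it is recalled from \cite[Remark 2.2]{amir2} — so there is nothing to compare against beyond noting that your direct verification is the expected argument.
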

\section{Zigzag Cohomology of Jacobi-Jordan algebras}\label{Sec3}
In this section, we develop a cohomology theory for Jacobi-Jordan algebras. The first problem is to define the differential operators and then define the first and second cohomology groups. After that we give  interpretations of these groups.
\subsection{Definition of a zigzag Cohomology}
We first single out the following definition:

\begin{definition}
Let $\mathfrak J$ and $M$ be two vector spaces. An $n$-linear map  $f:\underbrace{{\mathfrak J}\times{\mathfrak J}\ldots\times \mathfrak J}_{n\ \text{times}}\longrightarrow M,$ is said to be:
\begin{enumerate}
    \item [a)]
symmetric  if:
$${\displaystyle \;f(x_{\sigma (1)},\dots ,x_{\sigma (n)})=f(x_{1},\dots ,x_{n})}\mbox{ for all } \sigma \in  \mathfrak{S}_{n},$$
\item[b)] skew-symmetric if:
$${\displaystyle \;f(x_{\sigma (1)},\dots ,x_{\sigma (n)})=sign(\sigma)f(x_{1},\dots ,x_{n})}\mbox{ for all } \sigma \in  \mathfrak{S}_{n},$$
\end{enumerate}
$\mbox{ where } {\mathfrak S}_{n} \mbox{ is the group of  permutations of }\{1,...,n\}$.  For $n\in \mathbb N$, the set of $n$-linear maps is denoted by $L^n({\mathfrak  J},M)$. The set of symmetric (resp. skew-symmetric) $n$-linear maps is denoted by $S^n({\mathfrak  J},M)$ (resp. $A^n({\mathfrak  J},M))$. Moreover, we set $L^0({\mathfrak  J},M)=S^0({\mathfrak  J},M)=A^0({\mathfrak  J},M)=M$.
\end{definition}
Now, let ${\mathfrak  J}$ be a Jacobi-Jordan
algebra  over   a field $\mathbb K$ of characteristic zero. Let $M$ be a ${\mathfrak  J}$-module and $\pi:{\mathfrak  J}\longrightarrow End(M)$ be a representation of ${\mathfrak  J}$ in $M$.
Define the two following sequences of operators $(d^n)_n $ and $(\delta^n)_n$:
$$0 \longrightarrow {L}^{0}({\mathfrak  J},M)\xrightarrow[]{d^0} {L}^{1}({\mathfrak  J},M)\xrightarrow[]{d^1} {L}^{2}({\mathfrak  J},M)\xrightarrow[]{d^2}L^3({\mathfrak  J},M)\xrightarrow[]{d^3} ...L^n({\mathfrak  J},M)\xrightarrow[]{}\xrightarrow[]{d^n}L^{n+1}({\mathfrak  J},M)\xrightarrow[]{}...,$$

 $$\delta^n:A^n({\mathfrak  J},M)\longrightarrow L^{n+1}
({\mathfrak  J},M)$$\\

by
$$d^0(m)(x)=\delta^0(m)(x):=\pi(x)m,$$
\begin{align*}
    d^p(c)(x_1,...,x_{p+1})&=\sum_{i=1}^{p+1}\pi(x_i)c(x_1,...,\widehat{x_i},....,x_{n+1})+\sum_{1\leq i<j\leq p+1 }c(x_ix_j,x_1,...,\widehat{x_i},...,\widehat{x_j},...,x_{n+1}),
\end{align*}
and 
\begin{align*}
    \delta^p(c)(x_1,...,x_{p+1})&=\sum_{i=1}^{p+1}\pi(x_i)c(x_1,...,\widehat{x_i},....,x_{n+1})-\sum_{1\leq i<j\leq p+1 }c(x_ix_j,x_1,...,\widehat{x_i},...,\widehat{x_j},...,x_{n+1}),
\end{align*}
\begin{theorem}\label{prop6}
 For $p\geq 1$, we have $d^p\circ \delta^{p-1}=0$.
\end{theorem}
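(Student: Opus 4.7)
The plan is to substitute the defining formula for $\delta^{p-1}$ into that for $d^p$, organize the resulting summands according to how the two operators interlock, and then show that they cancel in four distinct ways, using the representation identity (\ref{eq-0}), the Jacobi identity of $\mathfrak{J}$, and the skew-symmetry of $c \in A^{p-1}(\mathfrak{J}, M)$.

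For $c \in A^{p-1}(\mathfrak{J}, M)$ and fixed $x_1, \ldots, x_{p+1} \in \mathfrak{J}$, expanding $\delta^{p-1}(c)$ inside both pieces of $d^p$ produces six kinds of summand: (I) doubly nested $\pi(x_i)\pi(x_j) c(\ldots, \widehat{x_i}, \widehat{x_j}, \ldots)$ coming from the outer $\pi(x_i)$ acting on the first sum of $\delta^{p-1}$; (II) terms $-\pi(x_i) c(x_jx_k, \ldots)$ from the outer $\pi(x_i)$ acting on the second sum of $\delta^{p-1}$; (III) terms $\pi(x_ix_j) c(\ldots, \widehat{x_i}, \widehat{x_j}, \ldots)$ arising when $\delta^{p-1}$ is evaluated on a list whose first entry is $x_ix_j$ and the position-$1$ contribution from its first sum is taken; (IV) the remaining contributions $\pi(x_k) c(x_ix_j, \ldots)$ from that same first sum; (V) terms $-c((x_ix_j)x_k, \ldots)$ from the position-$1$ contribution of the second sum; and (VI) terms $-c(x_kx_l, x_ix_j, \ldots)$ from the rest of that second sum.

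Next I would exhibit the four cancellations. For each unordered pair $\{i, j\}$, the two ordered summands in family (I) combine to $(\pi(x_i)\pi(x_j) + \pi(x_j)\pi(x_i)) c(\ldots)$, which by (\ref{eq-0}) equals $-\pi(x_ix_j) c(\ldots)$ and cancels the matching summand in (III). The summands in (II) and (IV) match term-by-term after relabeling the outer single index of one as the inner third index of the other, with opposite signs, and cancel. For each unordered 3-subset $\{i, j, k\}$, family (V) contributes $-c\bigl((x_ix_j)x_k + (x_ix_k)x_j + (x_jx_k)x_i, \text{rest}\bigr)$, which by commutativity equals $-c\bigl(x_k(x_ix_j) + x_j(x_ix_k) + x_i(x_jx_k), \text{rest}\bigr)$ and vanishes by the Jacobi identity. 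Finally, for each unordered 4-subset, the six summands of family (VI) pair off into three transpositions of the first two arguments of $c$ and cancel by skew-symmetry.

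The main obstacle I anticipate is not conceptual but combinatorial: one must keep careful track of signs and of which hat pattern attaches to each summand, especially in families (III)--(VI), where $\delta^{p-1}$ is applied to a list whose first slot is occupied by a product $x_ix_j$ rather than a single $x_k$, so that the position-$1$ role must be distinguished from the other positions throughout. Once this bookkeeping is pinned down, each cancellation is local (involving at most four indices at a time) and uniform in $p$, completing the proof.
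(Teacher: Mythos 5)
Your proposal is correct and follows essentially the same route as the paper's proof: both expand $d^p\circ\delta^{p-1}$ into the same families of terms and cancel them via the representation identity (your (I) against (III)), term-by-term matching of the mixed $\pi(x_i)c(x_jx_k,\ldots)$ terms (your (II) against (IV)), the Jacobi identity for the $c((x_ix_j)x_k,\ldots)$ terms, and skew-symmetry of $c$ for the $c(x_ix_j,x_kx_l,\ldots)$ terms. Your version is somewhat more explicit about the bookkeeping, but there is no substantive difference in the argument.
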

\begin{proof}
Let $c$ be a skew-symmetric  $(p-1)$-linear map in $A^{p-1}({\mathfrak  J},M)$.
We have,
\begin{align*}
&d^p\circ \delta^{p-1}c(x_1,...,x_{p+1})=\\& \sum_{i=1}^{p+1}~\pi(x_i)\delta^{p-1}c(x_1,...,\widehat{x_i},...x_{p+1})+ \sum_{1\leq i<j\leq p+1}\delta^{p-1}c(x_ix_j,x_1,...,\widehat{x_i},...,\widehat{x_j},...,x_{p+1}).
\end{align*}
The first sum of the right hand side:
\begin{align*}
& \sum_{i=1}^{p+1}~\pi(x_i)\delta^{p-1}c(x_1,...,\widehat{x_i},...x_{p+1})
=\\ &
\sum_{1\leq i< j\leq p+1}~\Bigl(\pi(x_i)\pi(x_j)+\pi(x_j)\pi(x_i)\Bigl)c(x_1,...,\widehat{x_i},...,\widehat{x_j},...,x_{p+1})\\&
-\sum \limits_{\underset{ j\neq i ,k\neq i, j<k}{1\leq i,j,k\leq p+1}}
~\pi(x_i)c(x_j x_k,...,\widehat{x_i},...,\widehat{x_j},...,...,\widehat{x_k},...,x_{p+1}).
\end{align*}
The second sum of the right hand  side:
\begin{align*}
&\sum_{1\leq i<j\leq p+1}\delta^{p-1}c(x_ix_j,x_1,...,\widehat{x_i},...,\widehat{x_j},...,x_{p+1})=
\\ &\sum_{1\leq i< j\leq p+1}~\pi(x_ix_j)c(x_1,...,\widehat{x_i},...,\widehat{x_j},...,x_{p+1})+\sum \limits_{\underset{ i\neq j,i\neq k, j<k}{1\leq i,j,k\leq p+1}}
~\pi(x_i)c(x_jx_k,x_1,...,\widehat{x_i},...,\widehat{x_j},...,\widehat{x_k},...,x_{p+1})\\
& -\sum \limits_{\underset{ \{i,j\}\cap
\{k,l\}=\emptyset,i<j,k<l }{1\leq i,j,k,l\leq p+1}}
~c(x_ix_j,x_kx_l,x_1...,\widehat{x_i},...,\widehat{x_j},...,\widehat{x_k},...,\widehat{x_l},...,x_{p+1})\\
&-\sum \limits_{\underset{ i<j,k\neq i, k\neq j }{ 1\leq i,j, k \leq  p+1}}
~c((x_ix_j)x_k,x_1...,\widehat{x_i},...,\widehat{x_j},...,\widehat{x_k},...,x_{p+1})\\&=\sum_{1\leq i< j\leq p+1}~\pi(x_ix_j)c(x_1,...,\widehat{x_i},...,\widehat{x_j},...,x_{p+1})+\sum \limits_{\underset{ j,k\neq i, j<k}{1\leq i,j,k\leq p+1}}
~\pi(x_i)c(x_jx_k,x_1...,\widehat{x_i},...,\widehat{x_j},...,\widehat{x_k},...,x_{p+1}),
\end{align*}
since $c$ is skew-symmetric and by using  Jacobi identity.
Consequently,
\begin{align*}
&d^p\circ \delta^{p-1}c(x_1,...,x_{p+1})=\\&
\sum_{1\leq i< j\leq p+1}~\Bigl(\pi(x_i)\pi(x_j)+\pi(x_j)\pi(x_i)+\pi(x_ix_j)\Bigl)c(x_1,...,\widehat{x_i},...,\widehat{x_j},...,x_{p+1})=0,
\end{align*}
because $\pi$ is a representation.

\end{proof}

\begin{notation}
We denote by $ ~_zZ^p
({\mathfrak  J}, M) :=\ker ( d^p)$
the space of $p$-cocycles, and by
$~_zB^p({\mathfrak  J},M) := \{c \in L^p({\mathfrak  J},M) ; \exists c' \in A^{p-1}({\mathfrak  J},M) : c = \delta^{p-1}c' \}$
the space of $p$-coboundaries. Then we define the $p^{\rm th}$ cohomology space of $\mathfrak  J$ with values/coefficients in $ M$
as the quotient 
$$~_zH^p(\mathfrak  J,M) := ~_zZ^p(\mathfrak  J,M) /~_zB^p(\mathfrak  J,M).$$
\end{notation}

\subsection{Low degree cohomology spaces and Extensions}
 An algebraic motivation
is the interpretation of the low degree cohomology spaces in terms of algebraic properties of the Jacobi-Jordan
algebra, which makes the cohomology spaces interesting invariant to compute. In degree zero we have:

$$~_zH^0({\mathfrak  J},M) = ~_zZ^0({\mathfrak  J},M) = \{m \in M ; d^0m = 0\} = \{m \in  M ; \forall x \in {\mathfrak  J} ; \pi(x)m = 0\}.$$
A particular case is here $M = {\mathfrak  J}$ with the adjoint action.
The quotient space is then
$$~_zH^0({\mathfrak  J}, {\mathfrak  J}) = Ann({\mathfrak  J}),$$ the annihilator of ${\mathfrak  J}$.

Now, we shall investigate the first group of cohomology.
Let $~_zZ^1({\mathfrak  J}, M)$ be the module of all linear maps $f$ from ${\mathfrak  J}$ to $M$
 such that
 $$\pi(x)f(y)+\pi(y)f(x)+f(xy)=0,\,\,\, x,y\in {\mathfrak  J}$$
for all $x,y\in {\mathfrak  J}$. Let $~_zB^1({\mathfrak  J}, M)$ be all those $f$ such that there is  an element $m$ belonging to $ M$ with $f(x)=\pi(x)m$ for all $x\in{\mathfrak  J}$. 
$$~_zH^1({\mathfrak  J},M) = ~_zZ^1({\mathfrak  J},M) /~_zB^1({\mathfrak  J},M) = ADer({\mathfrak  J},M) / PDer({\mathfrak  J},M),$$
with
$$ADer({\mathfrak  J},M) := \{f \in Hom({\mathfrak  J},M) ; \forall x, y \in {\mathfrak  J} : f(xy) = -\pi(x)  f(y) - \pi(y ) f(x)\},$$
and
$$PDer({\mathfrak  J},M) := \{f \in Hom({\mathfrak  J},M) ;  \exists m \in M ; \forall x \in {\mathfrak  J} : f(x) = \pi(x)m\}.$$ 
One of the most important case is where $M = {\mathfrak  J}$ with the adjoint action $(L)$ defined by 
$L_x(y)=xy,~~x,y\in\mathfrak  J$.
In this case, we use the following notation
$ADer({\mathfrak  J}, {\mathfrak  J}) = ADer({\mathfrak  J}) $, defining 
 the linear space of (all) antiderivations of ${\mathfrak  J}$ , and  $IADer({\mathfrak  J}, {\mathfrak  J}):=IADer({\mathfrak  J})$,  the subspace of inner derivations, i.e.
$IADer({\mathfrak  J}):=\{L_x; ~~ x\in {\mathfrak  J}\}.$ Indeed,  Proposition \ref{anticomm} says that for all $x$ in ${\mathfrak  J}$, $L_x:{\mathfrak  J}\longrightarrow {\mathfrak  J}$ defined by $L_x(y)=xy$, $\forall y\in {\mathfrak  J}$ is an antiderivation.
The quotient space is then
$~_zH^1({\mathfrak  J}, {\mathfrak  J}) = OADer({\mathfrak  J}),$
the space of outer antiderivations of ${\mathfrak  J}.$\\ 

Another very important case is $M = \mathbb K$ with the trivial action. Here
$$~_zH^1({\mathfrak  J}, \mathbb K) = \{f \in Hom({\mathfrak  J}, \mathbb K) ; \forall x, y \in {\mathfrak  J} : f(xy) = 0\} / \{0\} = ({\mathfrak  J} / {\mathfrak  J}^2)^* .$$

\begin{Example}[Computation of $_zH^1({\mathfrak J}_4, \mathbb K)$]

	
	Let $h$ be a linear form of ${\mathfrak J}_4$. By easy computation we can prove that  $h$ belongs to $~_zH^1({\mathfrak J}_4, \mathbb K) = \{f \in Hom({\mathfrak  J}_4, \mathbb K) ; \forall x, y \in {\mathfrak J}_4 : f(xy) = 0\}$
	 if and only if the matrix of $h$ with respect to  the basis $\{e_1,e_2,e_3,e_4\}$ is of the form $\left(
  \begin{array}{cccc}
    a & 0 & b & 0 
  \end{array}
\right),\,\, a,b\in \mathbb K.$ Thus, $~_zH^1({\mathfrak  J}_4, \mathbb K)$ is two dimensional. Moreover, $~_zH^1({\mathfrak  J}_4, \mathbb K)$ is spanned by $\{h_1:=\left(
  \begin{array}{cccc}
    1 & 0 & 0 & 0 
  \end{array}
\right),\,\,h_2:=\left(
  \begin{array}{cccc}
    0 & 0 & 1 & 0 
  \end{array}
\right)\}.$
\end{Example}
\begin{Example}[Computation of $_zH^1({\mathfrak J}_4, {\mathfrak J}_4)$]
	 
	 We next investigate the first group of cohomology of  ${\mathfrak  J}_4$ with the adjoint representation.
   We find the dimension and generators of $~_zH^1({\mathfrak  J}_4,{\mathfrak  J}_4)$.
   
	An endomorphism  $D$  on the previous algebra ${\mathfrak  J}_4$ is an antiderivation  if and only if the matrix of $D$ on the basis  $\{e_1,e_2,e_3,e_4\}$ is given by
$$\left(
  \begin{array}{cccc}
    \alpha_1 & 0 & 0 & 0 \\
    \alpha_2 & -2\alpha_1 & \alpha_3 & 0 \\
    \alpha_4 & 0 & \alpha_5 & 0 \\
    \alpha_6 & \alpha_4 & \alpha_{7} & -\alpha_{1}-\alpha_{5} \\
  \end{array}
\right),$$
where, $(\alpha_i)_{\{1\geq i\geq 7\}}$ are scalar.

The matrix of an inner antiderivation $L_x:{\mathfrak  J}_4\longrightarrow{\mathfrak  J}_4$ defined by $L_x(y):=xy;\,\,\, \forall y\in {\mathfrak  J}_4$ has the following form:
$$\left(
  \begin{array}{cccc}
    0 & 0 & 0 & 0 \\
    a & 0 & 0 & 0 \\
    0 & 0 & 0 & 0 \\
    b & 0 & a & 0 \\
  \end{array}
\right),$$
where $a$ and $b$ are scalar.
Thus, the dimension of $~_zH^1({\mathfrak  J}_4,{\mathfrak  J}_4)$ is $5$ and elements of $~_zH^1({\mathfrak  J}_4,\mathbb K)$ can be written as the classes of the following matrices:
$$\left(
  \begin{array}{cccc}
    \alpha_1 & 0 & 0 & 0 \\
    \alpha_2 & -2\alpha_1 & \alpha_3 & 0 \\
    \alpha_4 & 0 & \alpha_5 & 0 \\
    0 & \alpha_4 & 0 & -\alpha_{1}-\alpha_{5} \\
  \end{array}
\right),$$
where, $(\alpha_i)_{\{1\geq i\geq 5\}}$ are scalar.


\end{Example}


Now, we deal with the relationship between equivalence classes of abelian extensions and the second cohomology grous.

\begin{definition}\label{def1}
A short exact sequence of Jacobi-Jordan algebras 
\begin{eqnarray}\label{exact}
0 \longrightarrow M\xrightarrow[]{i}E\xrightarrow[]{p} {\mathfrak  J}\longrightarrow 0 ,
\end{eqnarray}
 is called an abelian
extension of $\mathfrak  J$  by $M$ in case $(i(M))^2=\{0\}$, where $i : M\longrightarrow E$ is the inclusion
map and $p : E\longrightarrow {\mathfrak  J}$ the projection.
As a vector space, $E=M\oplus {\mathfrak  J}$ and the subspace $M$ is obviously an $\mathfrak  J$-module, the Jacobi-Jordan algebra
structure on $E$
being given by:
\begin{eqnarray*}
(a\oplus x)(b\oplus y) = (\pi(x) b +\pi( y) a + c(x, y)\oplus x y).
\end{eqnarray*}
where $a,b\in M$, $x,y\in{\mathfrak  J}$ and  $c :{\mathfrak  J}\times{\mathfrak  J}\longrightarrow M$ is a bilinear map. Such a Jacobi-Jordan algebra is denoted by $E_c$.
\end{definition}
\begin{definition}
\begin{enumerate}\label{equivalentext}
\item[(1)] Two  abelian extensions $E_c$ and $E_{c'}$ of $\mathfrak  J$ by $M$ are called
equivalent in case the following diagram is commutative:
\[ \begin{tikzcd}\label{diag1}
0 \arrow{r}{}&M \arrow{r}{i} \arrow[swap]{d}{id_M} & E_c \arrow{r}{p}\arrow{d}{\varphi} &{\mathfrak  J}\arrow{r}{}\arrow[swap]{d}{id_{\mathfrak  J}}&0\\%
0 \arrow{r}{}&
M \arrow{r}{i}& E_{c'}\arrow{r}{p}& {\mathfrak  J} \arrow{r}{}& 0
\end{tikzcd}
\]
In this case, $\varphi$ is necessarily an isomorphism. Denote by $Ext({\mathfrak  J},M)$ the set (actually an abelian group)
of equivalence classes of abelian extensions of $\mathfrak  J$ by $ M$.
\item[(2)]An extension (\ref{exact}) is called trivial if it is equivalent to  the extension (\ref{semidirect}) with c = 0,  that is   the Jacobi-Jordan algebra $E_0=M\rtimes {\mathfrak  J}$
is  the semi-direct product of $\mathfrak J$ by $M$. If such an isomorphism does not exist, then the extension is called non-trivial.
\item[(3)] An extension (\ref{exact}) of a Jacobi-Jordan algebra is called  central extension if the module $M$ is trivial,
i.e., if the $\mathfrak  J$-action on $M$ is identically zero. In this case, $M$ belongs to the annihilator of $E$.
\end{enumerate}
\end{definition}

\begin{notation}
Hereafter we denote by:
$$~_zZ_a^n({\mathfrak  J},M)=~_zZ^n({\mathfrak  J},M)\bigcap A^n(\mathfrak  J,M).$$
$$~_zZ_s^n({\mathfrak  J},M)=~_zZ^n({\mathfrak  J},M)\bigcap S^n(\mathfrak  J,M).$$
$$ ~_zH_s^2({\mathfrak  J},M):= ( ~_zZ_s^2({\mathfrak  J},M)/~_zB^2({\mathfrak  J},M)\Bigl).$$
$$ ~_zH_s^3({\mathfrak  J},M):= ( ~_zZ_s^3({\mathfrak  J},M)/~_zB^3({\mathfrak  J},M)\Bigl).$$
\end{notation}
\begin{proposition}
 Let ${\mathfrak  J}$ be a Jacobi-Jordan algebra and $(\pi,M)$ be a representation of ${\mathfrak  J}$. Consider $$ C(M)=\{m\in M; \pi(x)m=0,~\forall x\in\mathfrak J\}.$$
 If $ C(M)=\{0\}$, then $ ~_zH^2({\mathfrak  J},M)= ~_zH_s^2({\mathfrak  J},M)$.
\end{proposition}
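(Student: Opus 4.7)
The plan is to establish the equality by showing two things: first, that every $2$-coboundary is symmetric (which is automatic from the definition), and second, that under the hypothesis $C(M)=\{0\}$, every $2$-cocycle is already symmetric. Together these identify the cocycle spaces and quotients.

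For the easy half, if $f\in L^{1}({\mathfrak J},M)=A^{1}({\mathfrak J},M)$, then
$$\delta^{1}f(x,y)=\pi(x)f(y)+\pi(y)f(x)-f(xy).$$
The sum $\pi(x)f(y)+\pi(y)f(x)$ is manifestly symmetric in $(x,y)$, and commutativity of ${\mathfrak J}$ gives $f(xy)=f(yx)$, so $\delta^{1}f\in S^{2}({\mathfrak J},M)$. Hence ${}_zB^{2}({\mathfrak J},M)\subseteq S^{2}({\mathfrak J},M)$.

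The main step is to symmetrize the cocycle identity. Given $c\in{}_zZ^{2}({\mathfrak J},M)$, write the cocycle condition $d^{2}c=0$ at $(x,y,z)$ and at $(y,x,z)$ and subtract. Inspection of the six terms in each expression shows that everything cancels except one: the two $\pi(x)$-terms (resp.\ $\pi(y)$-terms) match exactly; the terms $c(xz,y)$ and $c(yz,x)$ appear in both expansions and cancel; commutativity in ${\mathfrak J}$ forces $c(xy,z)=c(yx,z)$, so those cancel as well; what remains is
$$0=d^{2}c(x,y,z)-d^{2}c(y,x,z)=\pi(z)\bigl(c(x,y)-c(y,x)\bigr).$$
Since $z\in{\mathfrak J}$ is arbitrary, this says $c(x,y)-c(y,x)\in C(M)$ for all $x,y\in{\mathfrak J}$. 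The hypothesis $C(M)=\{0\}$ then forces $c(x,y)=c(y,x)$, i.e.\ $c\in S^{2}({\mathfrak J},M)$. Thus ${}_zZ^{2}({\mathfrak J},M)={}_zZ^{2}_{s}({\mathfrak J},M)$.

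Combining, ${}_zH^{2}({\mathfrak J},M)={}_zZ^{2}/{}_zB^{2}={}_zZ^{2}_{s}/{}_zB^{2}={}_zH^{2}_{s}({\mathfrak J},M)$, as desired. There is no serious obstacle here; the only technical point is verifying the cancellations in $d^{2}c(x,y,z)-d^{2}c(y,x,z)$, and these are driven entirely by the commutativity $xy=yx$ of the Jacobi-Jordan product (the Jacobi identity plays no role at this stage).
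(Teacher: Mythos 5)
Your proof is correct and follows essentially the same route as the paper's: both compute $d^{2}c(x,y,z)-d^{2}c(y,x,z)$, observe that commutativity leaves only $\pi(z)\bigl(c(x,y)-c(y,x)\bigr)$ (the paper phrases this as $2\pi(z)c_a(x,y)$ via the decomposition $c=c_s+c_a$), and conclude from $C(M)=\{0\}$ that every $2$-cocycle is symmetric. Your additional check that coboundaries are symmetric is a harmless extra confirmation that the two quotients coincide.
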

\begin{proof}
 Let $c\in  ~_zZ^2({\mathfrak  J},M)$. Then, there exist $c_s\in S^2(\mathfrak J)$ and  $c_a\in A^2(\mathfrak J)$ such that $c=C_s+c_a$. We have $d^2c(x,y,z)-d^2c(y,x,z)=2\pi(z)c_a(x,y).$
 Thus,  $\pi(z)c_a(x,y)=0$ for all $z\in \mathfrak J$, because $c$ is a $2$-cocycle. Consequently, $c_a(x,y)$ belongs to $C(M)=\{0\}$.
\end{proof}

\begin{theorem}\label{equivext}
$$~_zH_s^2(\mathfrak  J,M) \simeq
Ext(\mathfrak  J,M).$$
\end{theorem}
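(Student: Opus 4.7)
The plan is to establish a bijection by constructing mutually inverse maps $\Phi : \mathrm{Ext}(\mathfrak{J},M) \to {}_zH_s^2(\mathfrak{J},M)$ and $\Psi : {}_zH_s^2(\mathfrak{J},M) \to \mathrm{Ext}(\mathfrak{J},M)$. For the map $\Psi$, given a symmetric $2$-cocycle $c$, I equip $E_c := M \oplus \mathfrak{J}$ with the product of Definition~\ref{def1}. Commutativity of this product follows immediately from the symmetry $c(x,y)=c(y,x)$ together with $xy=yx$. For the Jacobi identity $J(a+x, b+y, d+z)=0$ in $E_c$, I expand it and separate the resulting expression into three groups: the purely $\mathfrak{J}$-valued part (which vanishes by the Jacobi identity in $\mathfrak{J}$), the part linear in the $M$-components $a,b,d$ (which vanishes because $\pi$ is a representation, cf.\ Definition~\ref{def2}, so that $\pi(xy)+\pi(x)\pi(y)+\pi(y)\pi(x)=0$), and the part living entirely in $M$ that depends on $c$. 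Using the symmetry of $c$ to rewrite $c(yz,x)=c(x,yz)$ etc., this last part is exactly $d^2c(x,y,z)$, which vanishes by hypothesis. Hence $E_c$ is a Jacobi-Jordan algebra and defines an abelian extension.

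For the map $\Phi$, given an abelian extension $0 \to M \xrightarrow{i} E \xrightarrow{p} \mathfrak{J} \to 0$, I pick a linear section $s : \mathfrak{J} \to E$ of $p$ and define $c(x,y) := s(x)\cdot s(y) - s(xy) \in i(M) \simeq M$. Commutativity in $E$ forces $c \in S^2(\mathfrak{J},M)$, and reading off the Jacobi identity in $E$ evaluated on $s(x),s(y),s(z)$ gives $d^2c=0$, by the same bookkeeping used for $\Psi$ run in reverse. If $s' = s + i\circ f$ is another section, a short calculation using that $i(M)\cdot i(M) = 0$ yields $c'(x,y)-c(x,y) = \pi(x)f(y) + \pi(y)f(x) - f(xy) = \delta^1 f(x,y)$, so the class $[c] \in {}_zH_s^2(\mathfrak{J},M)$ is independent of the chosen section. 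Moreover, if $E_c$ and $E_{c'}$ are equivalent via the isomorphism $\varphi$ in Definition~\ref{equivalentext}(1), then $\varphi$ is the identity on $M$ and $\mathfrak{J}$, hence differs from the identity by a linear map $f:\mathfrak{J}\to M$, and the same formula shows $c'-c = \delta^1 f$; thus $\Phi$ is well-defined on equivalence classes.

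To conclude, I verify $\Phi\circ\Psi = \mathrm{id}$ by noting that $E_c$ comes with the canonical section $x\mapsto 0\oplus x$, for which the cocycle recovered is $c$ itself, and $\Psi\circ\Phi = \mathrm{id}$ by noting that any extension $E$, once a section is fixed, is isomorphic as a Jacobi-Jordan algebra to $E_c$ via $a+s(x) \mapsto a\oplus x$. The step I expect to be the main obstacle is the verification that the Jacobi identity in $E_c$ is \emph{exactly} equivalent to $d^2c=0$: the bookkeeping of the six $c$-terms and six $\pi$-terms that arise from expanding $J(a+x,b+y,d+z)$ must be matched carefully against the defining formula of $d^2$, and the symmetry of $c$ is essential to make the two expressions coincide, which also clarifies why the correspondence lands in ${}_zH_s^2$ rather than in the full ${}_zH^2$.
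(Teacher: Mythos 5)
Your proposal is correct and follows essentially the same route as the paper: identify $E$ with $M\oplus\mathfrak J$, show commutativity corresponds to symmetry of $c$ and the Jacobi identity to $d^2c=0$ (using that $\pi$ is a representation), and show equivalence of extensions corresponds to $c'-c$ being a coboundary $\delta^1\psi$. The only difference is that you make explicit the choice of section and its independence, which the paper absorbs into its definition of abelian extension by fixing the splitting $E=M\oplus\mathfrak J$ from the outset.
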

\begin{proof}
Suppose that $E=\mathfrak J\oplus M$ endowed with  the following product:
\begin{eqnarray}\label{semidirect}
(a\oplus x)(b\oplus y) = (\pi(x) b +\pi( y) a + c(x, y)\oplus x y),
\end{eqnarray}
where $a,b\in M$, $x,y\in{\mathfrak  J}$ and  $c :{\mathfrak  J}\times{\mathfrak  J}\longrightarrow M$ is a bilinear map,
is a Jacobi Jordan algebra. It is clear that product (\ref{semidirect}) is commutative if and only if $c$ is symmetric. Moreover, the Jacobi identity is equivalent to the fact  that $c$ is a $2$-cocycle. Now, we shall prove that two extensions $E_c$ and $E_{c'}$ are equivalent   if and only if there exists  an isomorphism  $\varphi $ defined by
$$\varphi (a\oplus x):= a+ \psi(x)\oplus x ,$$ 
where $\psi :{\mathfrak  J}\longrightarrow M$  is a linear map.  In fact, Since $p\circ\varphi(0\oplus x)=p(0\oplus x)=x$, then there exists an element $\psi(x)\in M$ such that $\varphi(0\oplus x)=\psi(x)\oplus x$. On the other hand, $\varphi\circ i(a)=i(a)$. Then,  $\varphi (a\oplus 0)=a\oplus 0$. This means that
$$\varphi(a\oplus x)=\varphi(a\oplus 0)+\varphi(0\oplus x)=a+ \psi(x)\oplus x.$$ One can easily see that any such morphism is bijective. Consequently, $E_c$ and $E_{c'}$ are equivalent   if and only if $\varphi $ is an isomorphism, which  is equivalent to the fact that $c'=c+\delta^2(\psi)$. That is $c$ and $c'$  are cohomologous.
\end{proof}
\begin{corollary}
The set of trivial abelian extensions is in one to one correspondence with  $~_zB^2(\mathfrak  J,M).$
\end{corollary}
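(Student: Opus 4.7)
The plan is to deduce this corollary directly from Theorem \ref{equivext} and its proof, using the fact that the trivial extension corresponds to the zero cocycle. I will interpret the statement as saying that the $2$-cocycles $c$ whose associated extension $E_c$ is trivial are exactly the elements of $~_zB^2(\mathfrak J,M)$.

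First, I would recall the setup from Definition \ref{def1}: every abelian extension of $\mathfrak J$ by $M$ can, up to equivalence, be represented by a product of the form \ref{semidirect} for some bilinear map $c:\mathfrak J\times\mathfrak J\longrightarrow M$. The trivial extension is by definition the one obtained from $c=0$, that is, the semi-direct product $E_0=M\rtimes\mathfrak J$. Hence an abelian extension $E_c$ is trivial precisely when $E_c$ is equivalent to $E_0$.

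Second, I would invoke the key step already established in the proof of Theorem \ref{equivext}: two extensions $E_c$ and $E_{c'}$ are equivalent if and only if there exists a linear map $\psi:\mathfrak J\longrightarrow M$ such that $c'-c=\delta^1(\psi)$, i.e., $c$ and $c'$ are cohomologous in the zigzag complex. Specializing to $c'=0$, this shows that $E_c$ is trivial if and only if $c=-\delta^1(\psi)$ for some $\psi\in L^1(\mathfrak J,M)$, which is equivalent to $c\in~_zB^2(\mathfrak J,M)$ by definition of the $2$-coboundaries.

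The bijection is then explicit: to $c\in~_zB^2(\mathfrak J,M)$ we associate the trivial extension $E_c$, and conversely any trivial abelian extension arises (up to representative) from some coboundary $c$. There is no genuine obstacle here, since the nontrivial content — the characterization of equivalence of extensions via cohomologous cocycles — has been established in Theorem \ref{equivext}; the only care required is to be explicit that under the correspondence of Theorem \ref{equivext}, the distinguished class of the trivial extension maps to the zero class in $~_zH_s^2(\mathfrak J,M)$, so that its preimage in $~_zZ_s^2(\mathfrak J,M)$ is exactly $~_zB^2(\mathfrak J,M)$.
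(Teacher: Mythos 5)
Your proposal is correct and follows essentially the same route as the paper: both reduce the statement to the equivalence criterion established in the proof of Theorem \ref{equivext} (namely that $E_c$ and $E_{c'}$ are equivalent if and only if $c'-c=\delta^1\psi$ for some linear $\psi:\mathfrak J\to M$) and then specialize to the semidirect product $E_0$, concluding that $E_c$ is trivial exactly when $c$ is a coboundary. The sign in your $c=-\delta^1(\psi)$ is harmless since $-\delta^1(\psi)=\delta^1(-\psi)$, so nothing further is needed.
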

\begin{proof}
Let $E:{\mathfrak  J}\oplus M$ be a trivial abelian extension of $\mathfrak  J$. The product on $E$ is defined as above by:
$
(a\oplus x). (b\oplus y) = \pi(x) b+ \pi(y) a + c(x, y)\oplus x y.
$

Since $E$ is isomorphic to
 the semi-direct product  $M\rtimes {\mathfrak  J}$. By the proof of Theorem \ref{equivext} the isomorphism is defined above by:  $\varphi:a\oplus x \mapsto a+ \psi(x)\oplus x .$
 An easy computation implies that this fact is equivalent to the fact $c=\delta\psi.$
\end{proof}


   In the following example,  We compute the dimension and generators of $~_zH_s^2({\mathfrak  J}_4,\mathbb K).$ This characterises the abelian extensions of ${\mathfrak  J}_4$ by $\mathbb K$ called the one dimensional central extensions.

   \begin{Example}[Computation of $_zH^2({\mathfrak J}_4, \mathbb K)$]
  
	Let $c$ be an element of $~_zZ^2({\mathfrak  J}_4,\mathbb K).$ Then $c$ is a  bilinear form of ${\mathfrak  J}_4$ which satisfies the following equation:
	\begin{align}\label{s7}
	    c(xy,z)+c(yz,x)+c(xz,y)=0, \text{ for all } x,y,z\in{\mathfrak  J}_4.
	    	\end{align}
	    Apply the equation above to $x=e_i,\,y=e_j\,z=e_k$, we get the following assertions:
	$$
\begin{cases} \text{ For } i=j=k=1,\quad  c(e_2,e_1)=0,\\ 
\text{ For } i=j=1 \text{ and } k=2,\quad   c(e_2,e_2)=0,\\ 
\text{ For } i=j=1 \text{ and } k=3,\quad  c(e_2,e_3)=-2c(e_4,e_1),\\
\text{ For } i=j=1 \text{ and } k=4, \quad   c(e_2,e_4)=0,\\
\text{ For } i=1 \text{ and } j=k=3, \quad   c(e_4,e_3)=0,\\
\text{ For } i=1,\,j=3 \text{ and } k=4, \quad   c(e_4,e_4)=0.
\end{cases}
$$
Thus, the matrix of $c$ with respect to the basis $(e_1,e_2,e_3,e_4)$ is given by:
$$\left(
  \begin{array}{cccc}
    \alpha_{11} & 0 & \alpha_{13} & \alpha_{14} \\
    0 & 0 & -\frac{1}{2}\alpha_{14} & 0 \\
    \alpha_{31} & \alpha_{32} & \alpha_{33} & \alpha_{34} \\
    \alpha_{41} & 0 & 0 & 0 \\
  \end{array}
\right).$$
Consequently,  $~_zZ^2({\mathfrak  J}_4,\mathbb K)$ is 8-dimensional and generated by $\{E_{11},E_{13},E_{31},E_{32},E_{33},E_{34},E_{41},E_{14}-\frac{1}{2} E_{23} \}$, where $(E_{ij})$ are elementary matrices. 
\end{Example}

\begin{proposition}\label{prop5}
 The space  $~_zH^2({\mathfrak  J}_4,\mathbb K)$ is $6$-dimensional  and   generated by  the classes
 $$\{E_{13},E_{32},E_{33},E_{34},E_{41},E_{14}-\frac{1}{2} E_{23} \}.$$
\end{proposition}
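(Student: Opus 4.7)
The plan is to combine the computation of $~_zZ^2(\mathfrak{J}_4,\mathbb K)$ from the preceding example with an explicit computation of the coboundaries $~_zB^2(\mathfrak{J}_4,\mathbb K)$, and then take the quotient.

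From the previous example, I already have that $~_zZ^2(\mathfrak{J}_4,\mathbb K)$ is $8$-dimensional with explicit basis
$\{E_{11},E_{13},E_{31},E_{32},E_{33},E_{34},E_{41},E_{14}-\tfrac{1}{2}E_{23}\}$,
so the remaining task is purely to determine $~_zB^2(\mathfrak{J}_4,\mathbb K)$. I would do this by writing an arbitrary $1$-cochain $\psi\in \mathrm{Hom}(\mathfrak{J}_4,\mathbb K)$ as $(\psi(e_1),\psi(e_2),\psi(e_3),\psi(e_4))$ and applying the formula for $\delta^1$. Since the action of $\mathfrak{J}_4$ on $\mathbb K$ is trivial, the formula reduces simply to
\[
\delta^1(\psi)(x,y)=-\psi(xy),\qquad x,y\in\mathfrak{J}_4.
\]

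Next I would evaluate $\delta^1(\psi)(e_i,e_j)$ on every pair of basis vectors using the multiplication table $e_1e_1=e_2$, $e_1e_3=e_3e_1=e_4$ and zero elsewhere. Only the entries at positions $(1,1)$, $(1,3)$ and $(3,1)$ are nonzero, giving
\[
\delta^1(\psi)=-\psi(e_2)\,E_{11}-\psi(e_4)\bigl(E_{13}+E_{31}\bigr).
\]
Hence $~_zB^2(\mathfrak{J}_4,\mathbb K)$ is $2$-dimensional, spanned by $E_{11}$ and $E_{13}+E_{31}$, and in particular $\psi(e_1)$ and $\psi(e_3)$ contribute nothing to $\delta^1(\psi)$.

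Finally, I take the quotient. Since $E_{11}\in{}~_zB^2$ and $E_{31}\equiv -E_{13}\pmod{~_zB^2}$, the basis $\{E_{11},E_{13},E_{31},E_{32},E_{33},E_{34},E_{41},E_{14}-\tfrac{1}{2}E_{23}\}$ of $~_zZ^2$ projects onto the linearly independent classes $\{E_{13},E_{32},E_{33},E_{34},E_{41},E_{14}-\tfrac{1}{2}E_{23}\}$, so $~_zH^2(\mathfrak{J}_4,\mathbb K)$ has dimension $8-2=6$ and is generated by exactly the listed classes. The step involves nothing delicate; the only mild care needed is to verify the signs in the formula for $\delta^1$ and to check that the two coboundary generators really do lie in the chosen $8$-dimensional basis of $~_zZ^2$, which they do ($E_{11}$ is one of the basis vectors, and $E_{13}+E_{31}$ is a sum of two of them).
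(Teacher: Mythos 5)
Your proposal is correct and follows essentially the same route as the paper: both compute $\delta^1$ of an arbitrary linear form using the trivial action and the multiplication table, find that $~_zB^2({\mathfrak J}_4,\mathbb K)$ is $2$-dimensional spanned by $E_{11}$ and $E_{13}+E_{31}$, and then pass to the quotient of the $8$-dimensional cocycle space. The only (immaterial) difference is the overall sign in $\delta^1(\psi)=-\psi(xy)$, which you track more carefully than the paper does but which does not affect the image.
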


\begin{proof}

Let $f$  be the  linear form of ${\mathfrak  J}_4$ defined  by the matrix $\left(
  \begin{array}{cccc}
    a & b & c & d \\
  \end{array}
\right)$,  
  with respect to the  basis $(e_1,e_2,e_3,e_4)$. We have $\delta f(e_1,e_1)=f(e_2)=b$, $\delta f(e_1,e_3)=\delta f(e_3,e_1)=f(e_4)=d$ and $\delta f(e_i,e_j)=0$ if $(i,j)\notin \{(1,1),(1,3),(3,1)\}$.
  Thus the matrice $M( \delta f)$ of $\delta f$ is given by 
  $M( \delta f)=bE_{11}+d(E_{13}+E_{31}).$ Thus, $dim ~_zB^2({\mathfrak  J}_4,\mathbb K):=dim(Im\delta^1)=2$ and $~_zB^2({\mathfrak  J}_4,\mathbb K)$ is generated by $ E_{11}$ and $E_{13}+E_{31}.$ Consequently,  $dim ~_zH^2({\mathfrak  J}_4,\mathbb K)=6$ and $ ~_zH^2({\mathfrak  J}_4,\mathbb K)$ is generated by $\{E_{13},E_{32},E_{33},E_{34},E_{41},E_{14}-\frac{1}{2} E_{23} \},$  because $dim ~_zZ^2({\mathfrak  J}_4,\mathbb K)=8$ and $~_zZ^2({\mathfrak  J}_4,\mathbb K)$ is generated by $\{E_{11},E_{13},E_{31},E_{32},E_{33},E_{34},E_{41},E_{14}-\frac{1}{2} E_{23} \}$.
\end{proof}

   \begin{corollary}
    The space $Ext({\mathfrak J}_4,\mathbb K)$ is 2-dimensional and generated by $$\{ E_{33}, E_{23}+E_{32}-2E_{14}-2 E_{41} \}.$$
   \end{corollary}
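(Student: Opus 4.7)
The plan is to invoke Theorem \ref{equivext}, which gives $Ext({\mathfrak J}_4,\mathbb K) \simeq {}_zH_s^2({\mathfrak J}_4,\mathbb K)$, and then extract the symmetric part of the $2$-cocycle space already described in the preceding Example before quotienting by the coboundaries computed in the proof of Proposition \ref{prop5}.

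First, I will start from the general matrix of an element $c \in {}_zZ^2({\mathfrak J}_4,\mathbb K)$,
$$\left(\begin{array}{cccc}
\alpha_{11} & 0 & \alpha_{13} & \alpha_{14} \\
0 & 0 & -\frac{1}{2}\alpha_{14} & 0 \\
\alpha_{31} & \alpha_{32} & \alpha_{33} & \alpha_{34} \\
\alpha_{41} & 0 & 0 & 0
\end{array}\right),$$
and impose the symmetry condition $c(e_i,e_j) = c(e_j,e_i)$. Comparing entry $(i,j)$ with $(j,i)$ for each pair $i<j$ yields the four constraints $\alpha_{31} = \alpha_{13}$, $\alpha_{41} = \alpha_{14}$, $\alpha_{32} = -\frac{1}{2}\alpha_{14}$, and $\alpha_{34} = 0$. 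Consequently ${}_zZ_s^2({\mathfrak J}_4,\mathbb K)$ is $4$-dimensional, with basis
$$\{E_{11},\; E_{13}+E_{31},\; E_{33},\; (E_{14}+E_{41}) - \tfrac{1}{2}(E_{23}+E_{32})\}.$$

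Next I will divide out the coboundaries. From the proof of Proposition \ref{prop5}, for a linear form $f$ with coordinate vector $(a,b,c,d)$ one has $\delta^1 f = b\,E_{11} + d(E_{13}+E_{31})$, so ${}_zB^2({\mathfrak J}_4,\mathbb K)$ is spanned by $E_{11}$ and $E_{13}+E_{31}$. Both of these are symmetric, reflecting the general fact that on a commutative algebra the formula $\delta^1 f(x,y) = \pi(x)f(y) + \pi(y)f(x) - f(xy)$ is manifestly symmetric in $x,y$; hence ${}_zB^2({\mathfrak J}_4,\mathbb K) \subset {}_zZ_s^2({\mathfrak J}_4,\mathbb K)$, and the quotient has dimension $4-2 = 2$, with representatives the classes of $E_{33}$ and $(E_{14}+E_{41}) - \frac{1}{2}(E_{23}+E_{32})$. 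Rescaling the latter by $-2$ produces $E_{23}+E_{32}-2E_{14}-2E_{41}$, matching the generating set stated in the corollary.

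There is no substantive obstacle here: the computation is essentially bookkeeping once the previous Example and Proposition \ref{prop5} are in hand. The only point requiring care is verifying the inclusion ${}_zB^2 \subset {}_zZ_s^2$, without which the dimension count would not translate cleanly; it follows immediately from the commutativity of the product on ${\mathfrak J}_4$ together with the explicit formula for $\delta^1$.
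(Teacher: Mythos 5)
Your proposal is correct and follows essentially the same route as the paper: extract the symmetric part of the $2$-cocycle space from the Example, identify ${}_zB^2({\mathfrak J}_4,\mathbb K)=\mathrm{span}\{E_{11},\,E_{13}+E_{31}\}$ from the proof of Proposition \ref{prop5}, and quotient. The only cosmetic difference is that you conclude by a direct dimension count $4-2=2$ (after checking ${}_zB^2\subset{}_zZ_s^2$), whereas the paper instead argues that the remaining two generators are not coboundaries; both arguments are sound and yield the same generating classes.
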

   \begin{proof}
    We have from Example \ref{ex2} that the dimension of $~_zZ^2({\mathfrak  J}_4,\mathbb K)$ is $8$ and  generated by $\{E_{11},E_{13},E_{31},E_{32},E_{33},E_{34},E_{41},E_{14}-\frac{1}{2} E_{23} \}.$ Thus $~_zZ_s^2({\mathfrak  J}_4,\mathbb K)$  
   	 is of dimension $4$ and generated by the cocycles defined by $\{c_1=E_{11},c_2=E_{13}+E_{31},c_3=E_{33},c_4=E_{23}+E_{32}-2E_{14}-2 E_{41} \}.$

We have shown that $c_1$ and $c_2$ are  coboundaries.
Moreover, we have proved in the previous proposition that the classes of  $E_{32},E_{33},E_{34},E_{41},E_{14}-\frac{1}{2} E_{23} ,$ are linearly independent. Thus, 
$c_3 $ and $c_4 $ are not  coboundaries. Consequently, the dimension of $~_zH_s^2({\mathfrak  J}_4,\mathbb K)$ is $2$ and  generators are  $\{c_3,c_4\}$.  
\end{proof}

\section{Scalar  zigzag cohomology of pseudo-Euclidean Jacobi Jordan algebras}\label{Sec4}
We aim in this section to consider Jacobi-Jordan algebras with nondegenerate, symmetric bilinear forms and discuss their zigzag cohomology.
	\begin{definition}
		Let $({\mathfrak  J},\cdot)$ be a Jacobi-Jordan algebra. A bilinear form $B$ on  ${\mathfrak  J}$ is said to be an associative  scalar product on  $({\mathfrak  J},\cdot)$ if $B$ is nondegenerate, symmetric and associative bilinear form. Recall that $B$ is  associative means $B(x\cdot y,z)=B(x,y\cdot z), $ for all $ x,y,z \in {\mathfrak  J}.$
		
		A Jacobi-Jordan algebra  $({\mathfrak  J},\cdot )$ is called a pseudo-Euclidean Jacobi-Jordan algebra if it is endowed with an associative scalar product. \\
		We denote by $F(\mathfrak J)$ the vector space of all invariant symmetric bilinear forms of $\mathfrak J$ and by $\Delta_p$ the dimension of $F(\mathfrak J)$. The vector subspace  of $F(\mathfrak J)$ generated by all nondegenerate invariant symmetric bilinear forms  is denoted by $S(\mathfrak J)$ and its dimension by $b_p$. For more details about this type of algebras see \cite{ami} and \cite{amir5}.
	\end{definition}
	\begin{remark}
	Recall that in   \cite[Lemma 7.1.1 page 103]{amir5}, it is proved that if the field $\mathbb K$ is algebraically closed then $F(\mathfrak J)$ is equal to the vector space spanned by all associative scalar product of $\mathfrak J$, that is $\Delta_p=b_p$.
	\end{remark}
	\begin{definition}
		Let $({\mathfrak  J},B)$ be a pseudo-Euclidean Jacobi-Jordan algebra. A (anti-)derivation  $D$ is said to be symmetric  (resp. skew-symmetric) with respect to $B$ if  $B(D(x),y)=B(x,D(y))$ (resp.  $B(D(x),y)=-B(x,D(y))$) for all $x,y\in \mathfrak J$.
		
		The set of all symmetric (resp. skew-symmetric) derivations with respect to $B$  is denoted by $Der_s({\mathfrak  J})$ (resp. $Der_a({\mathfrak  J})$). We denote by $\Delta_s$ (resp. $\Delta_a$) the dimension of  $Der_s({\mathfrak  J})$ (resp.  $Der_a({\mathfrak  J})$). We also denote by $\Delta$ the dimension of $Der({\mathfrak  J})$.
	\end{definition}
	\begin{proposition}\label{transpose}
Let $(\mathfrak J,B)$ be a pseudo-Euclidean Jacobi-Jordan algebra. We have

 $$ ~_zZ^2(\mathfrak J,\mathbb K)\simeq  \{h\in End(\mathfrak J):  h(xy)=-x~^th(y)-y~^th(x)\}.$$
\end{proposition}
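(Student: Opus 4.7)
The plan is to identify bilinear forms on $\mathfrak J$ with endomorphisms of $\mathfrak J$ via the nondegenerate associative form $B$, then translate the $2$-cocycle condition into an operator identity. Since the coefficient module $M = \mathbb K$ carries the trivial action $\pi = 0$, the general formula for $d^2$ collapses and $c \in {~_zZ}^2(\mathfrak J, \mathbb K)$ amounts to
\[c(xy,z) + c(xz,y) + c(yz,x) = 0 \quad \forall\, x,y,z \in \mathfrak J.\]

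Nondegeneracy of $B$ yields a $\mathbb K$-linear bijection $L^2(\mathfrak J, \mathbb K) \xrightarrow{\sim} End(\mathfrak J)$ sending $c$ to the unique $h$ with $c(x,y) = B(h(x), y)$. The next step is to substitute this into the cocycle condition and rewrite each summand so that $z$ occupies a single fixed slot of $B$. Using the defining relation $B(h(a), b) = B(a, {}^th(b))$ of the transpose, together with associativity $B(ab, c) = B(a, bc)$, symmetry of $B$, and commutativity of $\mathfrak J$, I would compute
\[B(h(xz), y) = B(xz, {}^th(y)) = B(x \cdot {}^th(y), z),\]
and symmetrically $B(h(yz), x) = B(y \cdot {}^th(x), z)$. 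The cocycle condition then reads
\[B\bigl(h(xy) + x \cdot {}^th(y) + y \cdot {}^th(x),\, z\bigr) = 0 \quad \forall\, x,y,z \in \mathfrak J.\]

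A final application of nondegeneracy in $z$ upgrades this scalar relation to the pointwise identity $h(xy) = -x \cdot {}^th(y) - y \cdot {}^th(x)$, so the correspondence $c \leftrightarrow h$ restricts to the claimed isomorphism. There is no genuinely hard step; the main obstacle is clerical bookkeeping in the middle computation to keep track of which slot each argument occupies while shuffling between associativity and symmetry of $B$. It is also worth noting that the argument never invokes any (skew-)symmetry of $c$, so the full cocycle space ${~_zZ}^2(\mathfrak J, \mathbb K)$ is captured, not merely ${~_zZ}^2_s$ or ${~_zZ}^2_a$.
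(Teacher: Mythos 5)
Your proof is correct and follows essentially the same route as the paper: identify $c$ with $h$ via $c(x,y)=B(h(x),y)$ and use associativity, symmetry and nondegeneracy of $B$ to convert the cocycle identity into the operator identity; in fact you make explicit the computation the paper compresses into ``by the invariance of $B$ we get the result.'' The only difference is that the paper first splits $c=c_s+c_a$ to record that $h=f+g$ with $f={}^tf$ and $g=-{}^tg$ (which it reuses in the subsequent corollary), but that decomposition is not needed for the statement itself, and your observation that no symmetry of $c$ is required is accurate.
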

\begin{proof}
 Let $c\in ~_zZ^2(\mathfrak J,\mathbb K)$. There exist  $c_s\in S^2(\mathfrak J)$ and  $c_a\in A^2(\mathfrak J)$ such that $c=c_s+c_a$. The fact that $B$ is symmetric and nondegenerate implies that there exist $f,g\in End(\mathfrak J)$ such that 
 $c_s(x,y)=B(f(x),y)$ and $c_a(x,y)=B(g(x),y)$. It is clear that $f=~^tf$ and $g=-~^tg$. By the invariance of $B$ we get the result for $h=f+g$. The converse is clear.
\end{proof}
	\begin{corollary}\label{corol3}
Let $(\mathfrak J,B)$ be a pseudo-Euclidean Jacobi-Jordan algebra. We have
\begin{enumerate}
    \item[(i)]
 $ ~_zH_s^2(\mathfrak J,\mathbb K)\simeq  ADer_s(\mathfrak J)/IADer(\mathfrak J),$
 \item[(ii)] $~_zZ_a^2(\mathfrak J,\mathbb K)\simeq Der_a(\mathfrak J).$
 \end{enumerate}
\end{corollary}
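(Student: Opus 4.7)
The plan is to derive both isomorphisms by transporting the characterization provided by Proposition \ref{transpose} across the symmetric/skew-symmetric decomposition of a 2-cochain, and then to identify coboundaries under the same transport.

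First I would fix the correspondence from Proposition \ref{transpose}: every $c \in {}_zZ^2(\mathfrak J,\mathbb K)$ is written $c(x,y)=B(h(x),y)$ with $h(xy)=-x\,{}^th(y)-y\,{}^th(x)$. Write $c=c_s+c_a$ and correspondingly $h=f+g$ with $f={}^tf$ (symmetric part) and $g=-{}^tg$ (skew part), noting that the symmetric/skew-symmetric decomposition of $c$ is compatible with the transposition decomposition of $h$ because $B$ is symmetric. Then the cocycle identity splits.

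For (ii), restricting to $c\in {}_zZ_a^2(\mathfrak J,\mathbb K)$ (so $f=0$, $h=g=-{}^tg$), the defining identity becomes $g(xy)=x\,g(y)+y\,g(x)$; using commutativity of $\mathfrak J$ this is exactly $g(xy)=g(x)y+x\,g(y)$, the derivation identity. Combined with $g=-{}^tg$, we land in $Der_a(\mathfrak J)$. The map $c\mapsto g$ is injective by nondegeneracy of $B$, and conversely every $g\in Der_a(\mathfrak J)$ yields a skew-symmetric 2-cocycle via $c(x,y):=B(g(x),y)$. Hence ${}_zZ_a^2(\mathfrak J,\mathbb K)\simeq Der_a(\mathfrak J)$.

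For (i), restricting to symmetric $c$ (so $g=0$, $h=f={}^tf$), the same identity becomes $f(xy)=-x\,f(y)-y\,f(x)=-f(x)y-f(y)x$, i.e.\ $f$ is an anti-derivation; combined with $f={}^tf$, one has $f\in ADer_s(\mathfrak J)$. It remains to identify $_zB^2(\mathfrak J,\mathbb K)$ under this identification. Since $\mathbb K$ carries the trivial action, $\delta^1\varphi(x,y)=-\varphi(xy)$ for any $\varphi\in Hom(\mathfrak J,\mathbb K)$; by nondegeneracy of $B$ there exists $z_\varphi\in\mathfrak J$ with $\varphi(w)=B(z_\varphi,w)$, and associativity of $B$ gives $\delta^1\varphi(x,y)=-B(z_\varphi,xy)=-B(z_\varphi x,y)$. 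Thus the coboundary corresponds to $f=-L_{z_\varphi}$. A short check (using associativity and commutativity) shows $L_z$ is symmetric w.r.t.\ $B$ and lies in $ADer(\mathfrak J)$ by Proposition \ref{anticomm}, so coboundaries correspond exactly to $IADer(\mathfrak J)$. Passing to the quotient yields ${}_zH_s^2(\mathfrak J,\mathbb K)\simeq ADer_s(\mathfrak J)/IADer(\mathfrak J)$.

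The only subtle point I expect is bookkeeping: I must carefully check that commutativity of $\mathfrak J$ together with the transposition identity $f={}^tf$ (resp.\ $g=-{}^tg$) really converts the raw identity $h(xy)=-x\,{}^th(y)-y\,{}^th(x)$ into the anti-derivation identity (resp.\ derivation identity) without hidden sign errors, and that the coboundary formula for the trivial module reduces to $\delta^1\varphi(x,y)=-\varphi(xy)$ (a direct specialization of the definition of $\delta^p$). Everything else is routine linear algebra against the nondegenerate form $B$.
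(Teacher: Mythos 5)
Your proposal is correct and follows essentially the same route as the paper: it invokes Proposition \ref{transpose}, splits $h$ into its $B$-symmetric and $B$-skew parts matching the symmetric/skew decomposition of $c$, reads off the anti-derivation (resp.\ derivation) identity, and identifies coboundaries with inner antiderivations via nondegeneracy and associativity of $B$. Your version is in fact slightly more careful than the paper's (which leaves the sign bookkeeping and the check that $L_{a_0}\in ADer_s(\mathfrak J)$ implicit), and the harmless sign discrepancy $f=-L_{z_\varphi}$ versus $L_{a_0}$ is absorbed by $IADer(\mathfrak J)$ being a subspace.
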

\begin{proof}
 (i) Let $c\in~_zZ_s^2(\mathfrak J,\mathbb K)$. By Proposition \ref{transpose}, there   exists $h\in End(\mathfrak J)$ such that 
$ c(x,y)=B(h(x),y)$ for all $x,y\in \mathfrak J$. The fact that $c$ is symmetric implies that $h$ is $B$-symmetric, that is $h$ belongs to  $ADer_s(\mathfrak J)$.
Now, if $c\in ~_zB^2(\mathfrak J,\mathbb K)$, then there exist a linear map $f$ and an element $a_0$ of $\mathfrak J$ such that $c(x,y)=\delta^1 f(x,y)=B(a_0x,y)$ for all $x,y\in \mathfrak J.$ Since $B$ is nondegenerate, then, $\delta^1 f=L_{a_0}$. 

(ii) One gets it in a  similar way.
\end{proof}
\begin{lemma}
Let $\mathfrak J$ be a pseudo-Euclidean Jordan algebra and $\varphi$ be a symmetric
invariant bilinear form on $\mathfrak J$ (i.e., $ \varphi \in F(\mathfrak J))$. Then, the map $\mu(\varphi)
: \mathfrak J\times \mathfrak J\times \mathfrak J\longrightarrow\mathbb K$, defined
by $$\mu( \varphi) (x,y,z)= \varphi (xy,z),\text{ for all } x,y,z \in \mathfrak J,$$ is an element of $~_zZ_s^3(\mathfrak J,\mathbb K)$.
\end{lemma}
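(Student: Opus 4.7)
The statement has two parts to verify: first, that $\mu(\varphi)$ lies in $S^3(\mathfrak{J}, \mathbb{K})$, i.e.\ that it is totally symmetric in its three arguments; second, that $d^3 \mu(\varphi) = 0$, where the representation on the coefficient space $\mathbb{K}$ is trivial (so $\pi \equiv 0$). Trilinearity is immediate, so these are the only points.

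For symmetry, I would prove two transpositions that together generate $\mathfrak{S}_3$. Swapping the first two arguments,
\[
\mu(\varphi)(x,y,z) = \varphi(xy,z) = \varphi(yx,z) = \mu(\varphi)(y,x,z),
\]
uses only commutativity of the Jacobi-Jordan product. For the cyclic shift, combine associativity of $\varphi$ (i.e.\ invariance $\varphi(xy,z)=\varphi(x,yz)$) with symmetry of $\varphi$:
\[
\mu(\varphi)(x,y,z) = \varphi(xy,z) = \varphi(x,yz) = \varphi(yz,x) = \mu(\varphi)(y,z,x).
\]
A transposition and a 3-cycle generate $\mathfrak{S}_3$, so $\mu(\varphi) \in S^3(\mathfrak{J},\mathbb{K})$.

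For the cocycle condition, since $\pi=0$ the coboundary $d^3$ reduces to the single sum
\[
d^3 \mu(\varphi)(x_1,x_2,x_3,x_4) = \sum_{1\le i<j\le 4} \mu(\varphi)(x_ix_j, x_k, x_l),
\]
where $\{k,l\}=\{1,2,3,4\}\setminus\{i,j\}$ with $k<l$. I would rewrite each term using invariance and symmetry of $\varphi$:
\[
\mu(\varphi)(x_ix_j,x_k,x_l) = \varphi((x_ix_j)x_k, x_l) = \varphi(x_ix_j, x_kx_l).
\]
The six resulting terms pair off under the symmetry of $\varphi$ (the pairs being $\{(1,2),(3,4)\}$, $\{(1,3),(2,4)\}$, $\{(1,4),(2,3)\}$) and give
\[
d^3\mu(\varphi)(x_1,x_2,x_3,x_4) = 2\bigl[\varphi(x_1x_2,x_3x_4)+\varphi(x_1x_3,x_2x_4)+\varphi(x_1x_4,x_2x_3)\bigr].
\]
Finally, I would apply invariance once more to pull $x_1$ into the first slot, getting
\[
\tfrac{1}{2}d^3\mu(\varphi)(x_1,\ldots,x_4) = \varphi\bigl(x_1,\, x_2(x_3x_4)+x_3(x_2x_4)+x_4(x_2x_3)\bigr),
\]
and the inner expression vanishes by the Jacobi identity applied to $(x_2,x_3,x_4)$ (after using commutativity to rewrite $x_3(x_4x_2)=x_3(x_2x_4)$).

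The only mild obstacle is the bookkeeping: one must keep track of the six unordered pairs $(i,j)$, choose consistently which factor to ``push across'' $\varphi$ via invariance, and use both the symmetry of $\varphi$ and commutativity of the product at the right moments. Conceptually, though, the proof is just symmetry plus the Jacobi identity, so nothing deeper than the hypotheses on $\varphi$ and $(\mathfrak{J},\cdot)$ is required.
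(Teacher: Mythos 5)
Your proof is correct and follows essentially the same route as the paper's: expand the six terms of $d^3\mu(\varphi)$ (the $\pi$-terms vanishing since the action on $\mathbb{K}$ is trivial), shuffle arguments using the invariance and symmetry of $\varphi$, and kill the result with the Jacobi identity. The only difference is bookkeeping: you rewrite each term as $\varphi(x_ix_j,x_kx_l)$, pair them to get a factor of $2$, and apply Jacobi once, whereas the paper groups the six terms into two triples of the form $\varphi(J(\cdot,\cdot,\cdot),\cdot)$ and applies Jacobi twice --- both are fine.
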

\begin{proof}
Since $\varphi$ is symmetric and invariant, then $\mu(\varphi)$ is symmetric. Furthermore, it follows: 
\begin{align*}
   & d^3\mu(\varphi)(x,y,z,t)\\&=\mu(\varphi)(xy,z,t)+\mu(\varphi)(xz,y,t)+\mu(\varphi)(xt,y,z)+\mu(\varphi)(yz,x,t)+\mu(\varphi)(yt,x,z)+\mu(\varphi)(zt,x,y)\\& =\varphi((xy)z+(zx)y+(yz)x,t)+\varphi((xt)y+(ty)x+(yx)t,z)=0,
    \end{align*}
    for all  $x,y,x,t\in \mathfrak J.$
     We conclude that $\mu(\varphi)$ is an element of $~_zZ_s^3(\mathfrak J,\mathbb K)$.
\end{proof}
\begin{lemma}
 Let $(\mathfrak J,B)$ be a pseudo-Euclidean Jacobi-Jordan algebra. If $D \in Der(\mathfrak J)$, then the map $\nu(D): \mathfrak J\times\mathfrak J\longrightarrow\mathbb K$, defined by $$\nu(D)(x,y ) =
B(D(x), y) + B(x, D(y )), \text{ for all } x,y\in \mathfrak J,$$ is an element of $F(\mathfrak J).$
\end{lemma}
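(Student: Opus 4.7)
The plan is to verify the two conditions required for membership in $F(\mathfrak J)$: symmetry of the bilinear form $\nu(D)$, and its associativity (invariance) with respect to the product of $\mathfrak J$. Bilinearity is immediate from the linearity of $D$ and the bilinearity of $B$, so nothing has to be said about it.

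For symmetry, I would just chain together the symmetry of $B$:
$\nu(D)(y,x) = B(D(y),x) + B(y,D(x)) = B(x,D(y)) + B(D(x),y) = \nu(D)(x,y).$
This step is essentially a one-line observation.

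The substantive step is invariance, i.e.\ $\nu(D)(xy,z) = \nu(D)(x,yz)$ for all $x,y,z\in\mathfrak J$. The strategy is to expand $\nu(D)(xy,z) = B(D(xy),z) + B(xy,D(z))$ and rewrite $D(xy) = D(x)y + xD(y)$ using that $D$ is a derivation. This produces three terms $B(D(x)y,z)$, $B(xD(y),z)$, $B(xy,D(z))$. Applying the associativity of $B$ (and commutativity of $\mathfrak J$ where needed), the first becomes $B(D(x),yz)$, the second becomes $B(x,D(y)z)$, and the third becomes $B(x,yD(z))$. Collapsing the last two via bilinearity and using the derivation property once more gives
$B(x,D(y)z + yD(z)) = B(x,D(yz)),$
so altogether $\nu(D)(xy,z) = B(D(x),yz) + B(x,D(yz)) = \nu(D)(x,yz)$.

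There is no real obstacle here: the proof is essentially the standard verification that if $D$ is a derivation of an algebra with an invariant symmetric form $B$, then the coboundary $B(D\cdot,\cdot) + B(\cdot,D\cdot)$ remains invariant. The only care needed is to invoke associativity of $B$ in the correct direction in each of the three terms (and, for one of them, a single use of commutativity of the product to swap factors before applying associativity). No further properties of Jacobi-Jordan algebras beyond commutativity enter; in particular the Jacobi identity is not used.
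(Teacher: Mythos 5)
Your proof is correct and follows the same route as the paper: the paper's own proof simply asserts that symmetry of $\nu(D)$ follows from symmetry of $B$ and that invariance follows from invariance of $B$ together with the derivation property, which is exactly the computation you carry out in detail. Your expanded verification of $\nu(D)(xy,z)=\nu(D)(x,yz)$ is accurate and fills in what the paper leaves implicit.
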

\begin{proof}
  Since B is a symmetric bilinear form and $D$ is a linear form, it follows
that $\nu(D)$ is a symmetric bilinear form.  Moreover, $\nu(D)$ is invariant because B is invariant and $D$ is a derivation.
\end{proof}
\begin{theorem}\label{h3} Let $(\mathfrak J,B)$ be a pseudo-Euclidean Jacobi-Jordan algebra.
The following sequence 
\begin{eqnarray}
0 \longrightarrow Der_a({\mathfrak  J},B)\xrightarrow[]{i}Der({\mathfrak  J})\xrightarrow[]{\nu}F({\mathfrak  J}) \xrightarrow[]{\bar\mu} ~_zH_s^3(\mathfrak J,\mathbb K) 
\end{eqnarray}
is exact, where
$i(D)=D,\text{ for all } D\in Der(\mathfrak J)$ and $\bar\mu(\varphi)$ is the cohomology class of $\mu(\varphi)$.
\end{theorem}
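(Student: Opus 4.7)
The plan is to check exactness at each of the three interior positions. At $Der_a(\mathfrak J, B)$ the map $i$ is the inclusion, hence injective. At $Der(\mathfrak J)$, nondegeneracy of $B$ reduces the equation $\nu(D) = 0$ to $D = -{}^tD$, i.e.\ $D$ is $B$-skew-symmetric; combined with $D$ being a derivation, this identifies $\ker \nu$ with $Der_a(\mathfrak J, B) = \operatorname{Im}(i)$.

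The main work is at $F(\mathfrak J)$. For the inclusion $\operatorname{Im}(\nu) \subseteq \ker(\bar\mu)$, I take a derivation $D$, set $\varphi := \nu(D)$, and introduce the skew bilinear form $c(x, y) := B(D(x), y) - B(x, D(y))$. Expanding $\delta^2 c(x, y, z) = -c(xy, z) - c(xz, y) - c(yz, x)$, I use the derivation property to replace each $D(x_i x_j)$ by $D(x_i)x_j + x_iD(x_j)$, then invariance and symmetry of $B$ to rewrite every term in the form $B(D(x_k), x_l x_m)$. The sums collect to $\delta^2 c = -\mu(\varphi)$, which shows $\mu(\varphi)$ is a $3$-coboundary, hence $\bar\mu(\varphi) = 0$.

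The substantive direction is $\ker(\bar\mu) \subseteq \operatorname{Im}(\nu)$. Suppose $\mu(\varphi) = \delta^2 c$ for some $c \in A^2(\mathfrak J, \mathbb K)$. Using nondegeneracy of $B$ I write $\varphi(x, y) = B(\phi(x), y)$ with $\phi$ a $B$-symmetric endomorphism, and $c(x, y) = B(h(x), y)$ with $h$ a $B$-skew-symmetric endomorphism. Two identities are key. First, the invariance of $\varphi$ combined with invariance of $B$ and nondegeneracy yields $\phi(xy) = \phi(x)y = x\phi(y)$, so $\phi(x)y + x\phi(y) = 2\phi(xy)$. Second, the coboundary condition $\mu(\varphi) = \delta^2 c$, unpacked via invariance of $B$ and skew-symmetry of $h$, becomes the operator identity $h(xy) - h(x)y - xh(y) = -\phi(xy)$.

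I would then define $D := \tfrac{1}{2}(\phi - h)$. Since $\phi$ is $B$-symmetric and $h$ is $B$-skew, $D + {}^tD = \phi$, so $\nu(D) = \varphi$. The derivation property $D(xy) = D(x)y + xD(y)$ follows from a direct substitution of the two identities above into both sides, where the contribution $2\phi(xy)$ on the right matches $2\phi(xy)$ on the left produced through $-h(xy) = -h(x)y - xh(y) + \phi(xy)$. This delivers a derivation $D$ with $\nu(D) = \varphi$, completing exactness at $F(\mathfrak J)$. The main obstacle is identifying the correct combination $\tfrac{1}{2}(\phi - h)$; its coefficients are forced by decomposing the required derivation and symmetry conditions into their $B$-symmetric and $B$-skew parts.
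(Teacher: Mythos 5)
Your proposal is correct and follows essentially the same route as the paper: the same reduction of $\ker\nu$ to $B$-skew derivations, the same coboundary $B((D-{}^tD)x,y)$ for $\operatorname{Im}\nu\subseteq\ker\bar\mu$, and the same decomposition $\varphi(x,y)=B(\phi(x),y)$, $c(x,y)=B(h(x),y)$ with a half-sum of $\phi$ and $h$ giving the required derivation. The only difference is cosmetic: you track the minus sign in the definition of $\delta^2$ consistently and therefore take $D=\tfrac12(\phi-h)$, whereas the paper absorbs that sign into the skew form and uses $\tfrac12(f+h)$.
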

\begin{proof}
  It is clear that $i(Der_a(\mathfrak J, B)) = Der_a((\mathfrak J, B)) = \ker(\nu).$ Now we are going to show that $\nu(Der(\mathfrak J)) = \ker(\bar\mu).$ Let $D \in Der(\mathfrak J);$ denote by $^t
D$ the transpose of $D$ with respect to $B$. 
Let $x,y,z\in \mathfrak J$,
\begin{align*}
    &\mu(\nu(D))(x,y,z)=\nu(D)(xy,z)=B(D(xy),z)+B(xy,D(z))\\&=B(D(x)y,z)+B(xD(y),z)+B(xy,D(z))\\&=B(xy,D(z))+B(yz,D(x))+B(zx,D(y))\\
    & =\varphi(xy,z)+  \varphi(yz,x)+\varphi(zx,y),
\end{align*}
where $\varphi: \mathfrak J\times\mathfrak J\longrightarrow\mathbb K$ is defined by $\varphi(x,y)=B((D-^tD)x,y)$. \\
Thus, $\mu(\nu(D))=\delta^2\varphi.$ That is, $\nu(D)$ belongs to $\ker\bar\mu$.
Conversely, let $T\in F(\mathfrak J)$ such that $\bar\mu(T)=0$, that is there exists a skew-symmetric bilinear form $\varphi$ on $\mathfrak J$ such that
$$\mu(T)(x,y,z):=T(xy,z)=\delta^2\varphi(x,y,z).$$
  The fact that $B$ is nondegenerate implies that there exist two endomorphisms $f,h$  of $\mathfrak J$ such that $T (x,y)=B(f(x),y)$ and $\varphi(x,y) = B(h(x),y), \forall x,y \in \mathfrak J.$ Since $T$ is  invariant, then  $f( xy) =f(x)y=xf(y), \forall x,y\in\mathfrak J$. Moreover, $f = ~^tf$,  where $^t f$ is the transpose of $f$ with respect to $B$ because $B$ and $T$ are symmetric. The fact that $\varphi$ is skew-symmetric implies that $ h =-^th$,  where $^th$ is the transpose of $h$ with respect to $B$. Let $x,y,z\in \mathfrak J$; since
  $$T( xy,z)=\varphi(xy,z)+ \varphi(yz,x)+\varphi(xz,y)$$ then $$B(f(xy),z)=B(h(xy)- xh(y)-yh(x),z).$$ Because $B$ is nondegenerate, then $$f(xy)=h(xy)-xh(y)-yh(x).$$  Consequently, $\frac{1}{2}(f +h)$ is a derivation of $\mathfrak J$. It is clear that $f =(\frac{1}{2}(f +h))+~^t(\frac{1}{2}(f +h))$, thus $ T =\nu(\frac{1}{2}(f +h)).$ We conclude that $Ker(\bar\mu)\subset\nu(Der(\mathfrak J));$ so $Ker(\bar\mu)=\nu(Der(g))$.

\end{proof}

The theorem leads to the following corollary.
\begin{corollary}\label{dp}
Let $(\mathfrak J,B)$ be a pseudo-Euclidean Jacobi-Jordan algebra. We have 
$$b_p\leq \Delta_p\leq \Delta-\Delta_a+dim( ~_zH_s^3(\mathfrak J,\mathbb K)).$$ 
\end{corollary}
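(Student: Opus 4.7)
The plan is to derive both inequalities as essentially formal consequences of material already established: the lower bound from a definitional inclusion, and the upper bound from a dimension count on the four-term exact sequence in Theorem \ref{h3}.

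For the lower bound $b_p \leq \Delta_p$, I would simply note that, by definition, $S(\mathfrak J)$ is the subspace of $F(\mathfrak J)$ spanned by all nondegenerate invariant symmetric bilinear forms. Therefore $S(\mathfrak J) \subseteq F(\mathfrak J)$ as vector spaces, and the inequality on dimensions is immediate. No further argument is needed.

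For the upper bound, I would invoke the exact sequence of Theorem \ref{h3}:
\begin{equation*}
0 \longrightarrow Der_a(\mathfrak J,B) \xrightarrow{i} Der(\mathfrak J) \xrightarrow{\nu} F(\mathfrak J) \xrightarrow{\bar\mu} ~_zH_s^3(\mathfrak J,\mathbb K).
\end{equation*}
Exactness at $Der(\mathfrak J)$ gives $\ker(\nu) = i(Der_a(\mathfrak J,B))$, a subspace of dimension $\Delta_a$, so rank-nullity yields $\dim(\nu(Der(\mathfrak J))) = \Delta - \Delta_a$. Exactness at $F(\mathfrak J)$ says $\ker(\bar\mu) = \nu(Der(\mathfrak J))$, so decomposing $F(\mathfrak J)$ via $\bar\mu$ gives
\begin{equation*}
\Delta_p = \dim F(\mathfrak J) = \dim\ker(\bar\mu) + \dim\bar\mu(F(\mathfrak J)) = (\Delta-\Delta_a) + \dim\bar\mu(F(\mathfrak J)).
\end{equation*}
Since $\bar\mu(F(\mathfrak J))$ is a subspace of $~_zH_s^3(\mathfrak J,\mathbb K)$, we have $\dim\bar\mu(F(\mathfrak J)) \leq \dim(~_zH_s^3(\mathfrak J,\mathbb K))$, and the upper bound follows.

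There is no real obstacle here: all the content sits in Theorem \ref{h3} (the identification $\ker\nu = Der_a$, and the delicate fact that $\ker\bar\mu$ is exactly $\nu(Der(\mathfrak J))$, which uses nondegeneracy of $B$ and the symmetric/skew-symmetric splitting). Once that exact sequence is in hand, the corollary is a one-line dimension count together with the trivial containment $S(\mathfrak J)\subseteq F(\mathfrak J)$.
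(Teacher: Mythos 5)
Your proof is correct and follows exactly the route the paper intends: the paper gives no written proof, stating only that the corollary follows from Theorem \ref{h3}, and your dimension count on the exact sequence (rank--nullity at $Der(\mathfrak J)$, exactness at $F(\mathfrak J)$, and the containment $\bar\mu(F(\mathfrak J))\subseteq{}_zH_s^3(\mathfrak J,\mathbb K)$) together with the trivial inclusion $S(\mathfrak J)\subseteq F(\mathfrak J)$ is precisely the intended argument. Nothing is missing.
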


\begin{remark}
Furthermore, we have 
$$ \Delta_p \geq 1+\frac{m(m+1)}{2}  ,
$$
where $m$ is the dimension of the annihilator of $\mathfrak J$.
\end{remark}
\begin{Example}
      We consider the $4$-dimensional Jacobi-Jordan algebra  $ \mathfrak J_4 $ (Example \ref{ex2}),   and endow it with  pseudo-Euclidean structure $B$, see  \cite{amir2}, where the  invariant scalar product on  $\mathfrak J_4$ is defined by $$B(e_1,e_4) =B(e_2,e_3) =1.$$

      We shall calculate the dimension of $Der( \mathfrak J_4)$. An endomorphism $D$ of $ \mathfrak J_4$ is a derivation if and only if the matrix of $D$ with respect to  the basis  $\{e_1,e_2,e_3,e_4\}$ is written as
      {\begin{small}$$D=\left(
  \begin{array}{cccc}
    \alpha_1 & 0 & 0 & 0 \\
    \alpha_2 & 2\alpha_1 & \alpha_3 & 0 \\
    \alpha_4 & 0 & \alpha_5 & 0 \\
    \alpha_6 & 2\alpha_4 & \alpha_7 & \alpha_1+\alpha_5 \\
  \end{array}
\right).$$
\end{small}} Thus, $dim(Der\mathfrak J_4)=7.$
   We have shown in Example \ref{ex2} that  $~_zZ^2({\mathfrak  J}_4,\mathbb K)$  is spanned by the cocycle corresponding to  the following matrices with respect to  the basis $\{e_1,e_2,e_3,e_4\}$:
    \begin{small}
$$\left(
  \begin{array}{cccc}
    \alpha_{11} & 0 & \alpha_{13} & \alpha_{14} \\
    0 & 0 & -\frac{1}{2}\alpha_{14} & 0 \\
    \alpha_{31} & \alpha_{32} & \alpha_{33} & \alpha_{34} \\
    \alpha_{41} & 0 & 0 & 0 \\
  \end{array}
\right).$$ \end{small}
Consequently,   $dim(~_zZ_a^2({\mathfrak  J}_4,\mathbb K))=2$. By $(ii)$ of  Corollary \ref{corol3}, we have $dim(Der_a(\mathfrak J_4))=2$. Now, we have to compute  $~_zH_s^3({\mathfrak  J}_4,\mathbb K)).$ A basis of  $~_zZ_s^3({\mathfrak  J}_4,\mathbb K))$ is given by 
 $\{c_1,...c_{6}\}$, where the nonzero terms are 
	 \begin{align*}
	    & c_1(e_1,e_3,e_4)=-\frac{1}{4}c_1(e_2,e_3,e_3)=1,\;\\ &
	      c_{2}(e_3,e_3,e_3)=1,\;\\ &
	      c_3(e_1,e_1,e_4)=-c_3(e_1,e_2,e_3)=1,\\
	   & c_{4}(e_1,e_1,e_1)=1,\;\\ &
	      c_5(e_1,e_1,e_3)=1,\;\\ &
	        c_{6}(e_1,e_3,e_3)=1.	        
	 \end{align*}
	 Let $f_1,f_2\in  A^2(\mathfrak J).$ We have
	 $$\delta f_1(e_2,e_3,e_3)=0\neq c_1(e_2,e_3,e_3)=1.$$ 
	 $$\delta f_2(e_3,e_3,e_3)=0\neq c_2(e_3,e_3,e_3)=1.$$ Thus, $c_1$ and $c_2$ are not  coboundaries. Now we prove that $c_3,...,c_6$  are  coboundaries.
	 For $f_3\in  A^2(\mathfrak J)$ defined by $f_3(e_2,e_4)=-f_3(e_4,e_2)=1$ and $f_3(e_i,e_j)=0$ otherwise,  we have $c_3=\delta f_3 $.
	 For $f_4\in  A^2(\mathfrak J)$ defined by  $f_4(e_1,e_2)=-\frac{1}{3}$ and $f_3(e_i,e_j)=0$ otherwise,  we have $c_4=\delta f_4 $.
	 For $f_5\in  A^2(\mathfrak J)$ defined by  $f_5(e_2,e_3)=1$ and $f_3(e_i,e_j)=0$ otherwise,  we have $c_5=\delta f_5$.
	 For $f_6\in  A^2(\mathfrak J)$ defined by $f_6(e_3,e_4)=-\frac{1}{2}$ and $f_6(e_i,e_j)=0$ otherwise,  we have $c_6=\delta f_6 $.
	 Thus, $c_3,...,c_6$  are  coboundaries.
	 Consequently, $dim(~_zH_s^3({\mathfrak  J}_4,\mathbb K))=2$ and $~_zH_s^3({\mathfrak  J}_4,\mathbb K)$ is spanned by the cohomological classes  $c_1$ and $c_2$.
	 By Corollary \ref{dp}, we have:
	 $$dim(F(\mathfrak J_4))\leq 7.$$
	 
\end{Example}
\section{Deformations of Jacobi-Jordan algebras}\label{Sec5}
One-parameter formal deformations were introduced for associative algebras by Gerstenhaber \cite{Gerst} and then studied for Lie algebras by Nijenhuis and Richardson \cite{NijenhuisRichardson}. This approach based on formal power series was extended to various algebraic structures.
In this section we discuss  formal deformations of Jacobi-Jordan algebras and  formal   deformations of  Jacobi-Jordan algebras homomorphisms. We show that the zigzag cohomology defined above is suitable to establish   their connections to cohomology groups.\\

Let $\mathbb{K}[\![t]\!]$ be the power series ring in one variable $t$
and coefficients in $\mathbb{K}$ and ${\mathfrak  J}[\![t]\!]$ be the set of formal power 
series whose coefficients are elements of the vector space ${\mathfrak  J}$, $({\mathfrak  J}[\![t]\!]$ is
obtained by extending the coefficients domain of ${\mathfrak  J}$ from $\mathbb{K}$ to $\mathbb{K}[\![t]\!])$.\\


\begin{definition}
A one-parameter formal deformation of a Jacobi-Jordan algebra $({\mathfrak  J},\cdot )$ is a   Jacobi-Jordan  $\mathbb{K}[\![t]\!]$-algebra $(\mathfrak{J}[\![t]\!],\mu_{t})$,  where $\mu_{t}=\sum_{i\geq 0}^{\infty}t^{i}\mu_{i}$, which is a $\mathbb{K}[\![t]\!]$-bilinear map such that   $ \mu_0(x,y) =x\cdot y$ and 
\begin{align}
    &\mu_t(x,y)=\mu_t(y,x),\label{s1}\\
    &\mu_t(x,\mu_t(y,z))+\mu_t(y,\mu_t(z,x))+\mu_t(z,\mu_t(x,y))=0, \text{ for all } x,y,z\in \mathfrak  J.\label{s2}
\end{align}

The deformation is said to be of order $N$ if  $\mu_{t}=\sum_{i\geq 0}^N\mu_{i}t^{i}$ and infinitesimal if  $N=1$.
\end{definition}
\begin{remark}
\begin{enumerate}
    \item 
Condition (\ref{s1}) is equivalent to the fact that for all $i$,  $\mu_i$ is a symmetric bilinear map.
\item Condition (\ref{s2}) is called deformation equation and  is equivalent to 
$$\sum_{i,j=0}^{\infty}t^{i+j}(\mu_i(x,\mu_j(y,z))+\mu_i(y,\mu_j(z,x))+\mu_i(z,\mu_j(x,y)))=0.$$
By identification with respect to parameter $t$, we get the following  equations for all $p\in\mathbb N$:
$$\sum_{i=0}^p(\mu_i(x,\mu_{p-i}(y,z))+\mu_i(y,\mu_{p-i}(z,x))+\mu_i(z,\mu_{p-i}(x,y)))=0.$$
The first equation ($p=0$) corresponds to Jacobi condition with respect to the initial Jacobi-Jordan multiplication $\mu_0$.
For $p=1$ the equation corresponds to:
\begin{align}\label{s3}
&\mu_0(x,\mu_1(y,z))+\mu_0(y,\mu_1(z,x))+\mu_0(z,\mu_1(x,y))\nonumber \\&+\mu_1(x,\mu_0(y,z))+\mu_1(y,\mu_0(z,x))+\mu_1(z,\mu_0(x,y))=0.
\end{align}

If we consider $\mathfrak  J$ as a $\mathfrak  J$-module with respect to the adjoint action, we get that the equation (\ref{s3})  is equivalent to the fact that $ d^2\mu_1(x,y,z)=0,$ for all $x,y,z\in\mathfrak  J$.
\end{enumerate}
\end{remark}
Hence, we have  the following proposition. 
\begin{proposition}\label{-s8}
Let $(\mathfrak  J,\mu_0)$ be a Jacobi-Jordan algebra and $\mu_t=\sum_i t^i\mu_i$ be a deformation of $\mathfrak  J$.  Then, $$\mu_1\in ~_zZ_s^2(\mathfrak  J,\mathfrak  J).$$
\end{proposition}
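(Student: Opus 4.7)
The plan is to verify the two conditions defining $~_zZ_s^2(\mathfrak J,\mathfrak J)$ separately: symmetry of $\mu_1$, and the 2-cocycle condition $d^2\mu_1=0$ with respect to the adjoint representation. Both follow by extracting the coefficient of $t^1$ from the structure equations imposed on $\mu_t$.

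First I would address symmetry. Writing out the identity (\ref{s1}) as
$\sum_{i\geq 0} t^i \mu_i(x,y) = \sum_{i\geq 0} t^i \mu_i(y,x)$
and identifying coefficients of $t^i$ forces $\mu_i(x,y)=\mu_i(y,x)$ for every $i\geq 0$; in particular $\mu_1\in S^2(\mathfrak J,\mathfrak J)$.

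Next I would derive the cocycle property. As noted in the preceding remark, collecting the coefficient of $t^1$ in the deformation equation (\ref{s2}) yields exactly equation (\ref{s3}). To finish I would unfold the definition of $d^2$ with the adjoint action $\pi(x)(y)=\mu_0(x,y)=xy$: for any $c\in L^2(\mathfrak J,\mathfrak J)$,
\begin{align*}
d^2c(x,y,z) =~& \mu_0(x,c(y,z))+\mu_0(y,c(x,z))+\mu_0(z,c(x,y))\\
& + c(xy,z)+c(xz,y)+c(yz,x).
\end{align*}
Substituting $c=\mu_1$ and invoking the symmetry of $\mu_1$ already established (together with commutativity of $\mu_0$), the six terms of $d^2\mu_1(x,y,z)$ match one-for-one the six terms in (\ref{s3}); for instance $\mu_1(x,\mu_0(y,z))=\mu_1(yz,x)=\mu_1(x,yz)$ reproduces the correct coboundary contribution. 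Hence $d^2\mu_1=0$, so $\mu_1\in~_zZ_s^2(\mathfrak J,\mathfrak J)$.

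There is no genuine obstacle: the entire content is the observation that the order-$t$ coefficient of the deformation equation \emph{is} the 2-cocycle condition by construction. The only mild care required is the bookkeeping match between the two triples of terms in (\ref{s3}) and $d^2\mu_1$, and a sanity check that the signs in the definition of $d^p$ (not $\delta^p$) are the appropriate ones, which they are because the deformation equation is a sum of three Jacobi-style terms with no alternating signs.
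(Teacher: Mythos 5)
Your argument is exactly the one the paper uses: the symmetry of each $\mu_i$ is read off from condition (\ref{s1}), and the coefficient of $t^1$ in the deformation equation (\ref{s2}) is identified with equation (\ref{s3}), which the paper's preceding remark observes is precisely $d^2\mu_1=0$ for the adjoint action. The proposal is correct and matches the paper's reasoning, merely spelling out the term-by-term matching that the paper leaves implicit.
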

\begin{definition}
Let $(\mathfrak  J,\mu_0)$ be a Jacobi-Jordan algebra and let $\mu_t=\sum t^i\mu_i$ and $\mu'_t=\sum t^i\mu'_i$ be two deformations of $\mathfrak  J$ such that $ \mu_0= \mu'_0$.
We say that $\mu$ and $\mu'$ are equivalent if there exists a formal isomorphism of the form $$\Phi_t=\Phi_0+t\Phi_1+t^2\Phi_2+...., \text{ where } \Phi_0=id,$$ such that 
\begin{align}   \label{s4}
\Phi_t\circ \mu'_t=\mu_t\circ( \Phi_t\otimes{\Phi_t}).
\end{align}
Expanding (\ref{s4}) and identifying coefficient of $t^s$ lead to:
\begin{align}\label{s5}
    \sum_{i+j=s}\Phi_i \mu'_j-\sum_{i+j+k=s}\mu_i\circ( \Phi_j\otimes{\Phi_k})=0.
    \end{align}
\end{definition}
Notice that equation for  $s=0$ since   $ \mu_0= \mu'_0$ because $\Phi_0=id$.
\begin{proposition}\label{s17}
Let $(\mathfrak  J,\mu_0)$ be a Jacobi-Jordan algebra. Let   $\mu_t=\sum t^i\mu_i$ and $\mu'_t=\sum t^i\mu'_i$ be two equivalent deformations of $\mathfrak  J$. Then $\mu'_1$ and $\mu_1$ are cohomologous.
\end{proposition}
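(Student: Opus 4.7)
The plan is to extract the $t^1$ coefficient from the equivalence relation (\ref{s4}) and show that the resulting identity is exactly the statement $\mu'_1 - \mu_1 = \delta^1 \Phi_1$, where $\delta^1$ is the zigzag coboundary with $\mathfrak J$ viewed as its own module via the adjoint action. Since by Proposition \ref{-s8} both $\mu_1$ and $\mu'_1$ already lie in $~_zZ_s^2(\mathfrak J,\mathfrak J)$, this identity says their difference is a coboundary.

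First, I would apply the general identification formula (\ref{s5}) with $s=1$. The only contributing triples are $(i,j)=(0,1),(1,0)$ on the left and $(i,j,k)=(1,0,0),(0,1,0),(0,0,1)$ on the right, giving
\begin{equation*}
\Phi_0\mu'_1 + \Phi_1\mu'_0 - \mu_1\circ(\Phi_0\otimes\Phi_0) - \mu_0\circ(\Phi_1\otimes\Phi_0) - \mu_0\circ(\Phi_0\otimes\Phi_1) = 0.
\end{equation*}
Using $\Phi_0=\mathrm{id}$ and $\mu'_0=\mu_0$, this collapses to
\begin{equation*}
\mu'_1(x,y) - \mu_1(x,y) = \mu_0(\Phi_1(x),y) + \mu_0(x,\Phi_1(y)) - \Phi_1(\mu_0(x,y)).
\end{equation*}

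Next I would identify the right-hand side with $\delta^1\Phi_1(x,y)$. For the adjoint representation $\pi(x)=L_x$, the coboundary formula from Section \ref{Sec3} reads $\delta^1 f(x,y) = \pi(x)f(y) + \pi(y)f(x) - f(xy) = \mu_0(x,f(y)) + \mu_0(y,f(x)) - f(\mu_0(x,y))$. Commutativity of $\mu_0$ lets me rewrite the term $\mu_0(\Phi_1(x),y)$ as $\mu_0(y,\Phi_1(x))$, so indeed $\mu'_1 - \mu_1 = \delta^1\Phi_1$. Noting that any $1$-linear map is trivially in $A^1(\mathfrak J,\mathfrak J)$, the map $\delta^1\Phi_1$ genuinely belongs to $~_zB^2(\mathfrak J,\mathfrak J)$, and a direct swap of arguments using commutativity of $\mu_0$ shows it is symmetric, consistent with the fact that $\mu_1,\mu'_1$ are themselves symmetric.

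The conclusion is that $\mu_1$ and $\mu'_1$ represent the same class in $~_zH_s^2(\mathfrak J,\mathfrak J)$. The only real care needed is bookkeeping of the $s=1$ expansion and recognizing the asymmetry in the sign conventions between the $\delta$-operator (which subtracts the $c(x_ix_j,\dots)$ term) and the naive formal expansion. Once the signs are tracked correctly, the identification is immediate and no further computation is required.
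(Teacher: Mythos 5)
Your proof is correct and follows essentially the same route as the paper's: extract the $s=1$ coefficient from the equivalence relation (\ref{s5}), use $\Phi_0=\mathrm{id}$ and $\mu'_0=\mu_0$ to obtain $\mu'_1-\mu_1=\mu_0(\Phi_1\otimes \mathrm{id})+\mu_0(\mathrm{id}\otimes\Phi_1)-\Phi_1\mu_0$, and recognize the right-hand side as $\delta^1\Phi_1$ for the adjoint action. Your additional remarks on symmetry and membership in $A^1(\mathfrak J,\mathfrak J)$ are sound bookkeeping that the paper leaves implicit.
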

\begin{proof}
For $s=1$ in (\ref{s5}), one finds that 
$$ \mu'_1+\Phi_1\mu'_0-\mu_1-\mu_0(\Phi_1\otimes id)-\mu_0(id \otimes \Phi_1)=0,$$
which is equivalent to 
\begin{align} \label{s6}
\mu'_1=\mu_1+\mu_0(\Phi_1\otimes id)+\mu_0(id \otimes \Phi_1)-\Phi_1\mu_0=0.
\end{align}
Recall that $\delta^1\Phi_1=\mu_0(\Phi_1\otimes id)+\mu_0(id \otimes \Phi_1)-\Phi_1\mu_0.$ 
Then the equality (\ref{s6}) is equivalent to:
$$\mu'_1=\mu_1+\delta^1\Phi_1.$$
\end{proof}
Therefore the first term of a deformation depends only on its cohomology class with respect to zigzag cohomology of a Jacobi-Jordan algebra.
\begin{proposition}
 Let $(\mathfrak  J,\mu_0)$ be a Jacobi-Jordan algebra. There is, over $K[[t]]/t^2,$ a one-to-one correspondence between  elements $\mu_1$ of $~_zH_s^2({\mathfrak  J},{\mathfrak  J})$ and  inﬁnitesimal deformations of $\mathfrak  J$ deﬁned by
 $$\mu_t(x,y)=\mu_0(x,y)+t\mu_1(x,y), \text{ for all }x,y \in\mathfrak  J. $$
 \end{proposition}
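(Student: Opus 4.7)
The plan is to package the content of Proposition \ref{-s8} and Proposition \ref{s17} into a bijection between infinitesimal deformations modulo equivalence and the group $~_zH_s^2(\mathfrak J,\mathfrak J)$, then notice that over $\mathbb K[\![t]\!]/t^2$ the deformation equation and the equivalence relation both truncate cleanly.

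First I would define the map $\Psi$ sending an infinitesimal deformation $\mu_t=\mu_0+t\mu_1$ to the class $[\mu_1]\in ~_zH_s^2(\mathfrak J,\mathfrak J)$. By Proposition \ref{-s8}, $\mu_1$ is a symmetric $2$-cocycle, and moreover working over $\mathbb K[\![t]\!]/t^2$ the deformation equation (\ref{s2}) only produces the identities coming from the coefficients of $t^0$ and $t^1$; the $t^0$-part is Jacobi for $\mu_0$ (automatic) and the $t^1$-part is exactly $d^2\mu_1=0$, so $\mu_1\in ~_zZ_s^2(\mathfrak J,\mathfrak J)$. The map $\Psi$ is therefore well defined on the set of infinitesimal deformations before passing to equivalence. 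By Proposition \ref{s17}, if $\mu_t$ and $\mu'_t$ are equivalent infinitesimal deformations (via $\Phi_t=\mathrm{id}+t\Phi_1$ modulo $t^2$), then $\mu'_1-\mu_1=\delta^1\Phi_1$, so the induced map on equivalence classes lands in $~_zH_s^2(\mathfrak J,\mathfrak J)$.

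Next I would construct the inverse. Given any $\mu_1\in ~_zZ_s^2(\mathfrak J,\mathfrak J)$, set $\mu_t:=\mu_0+t\mu_1$. Symmetry of $\mu_t$ is equivalent to symmetry of $\mu_1$, which holds by definition of $~_zZ_s^2$. The deformation equation modulo $t^2$ reduces, as above, to $d^2\mu_1=0$, which is the cocycle condition. Hence $\mu_t$ is a genuine infinitesimal deformation. This shows surjectivity of $\Psi$ on classes. For injectivity, if $\Psi([\mu_t])=\Psi([\mu'_t])$, then $\mu'_1=\mu_1+\delta^1\Phi_1$ for some linear $\Phi_1:\mathfrak J\to\mathfrak J$, and I would exhibit the equivalence $\Phi_t=\mathrm{id}+t\Phi_1$ modulo $t^2$, verifying (\ref{s5}) directly for $s=1$ (the only relevant order).

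The main obstacle is not computational but conceptual: one must be careful that working ``over $\mathbb K[\![t]\!]/t^2$'' really does collapse all higher-order obstructions, both in the deformation equation and in the definition of equivalence. Once that is granted, the statement is essentially a repackaging of the preceding two propositions, and the proof reduces to checking that the map $\mu_1\mapsto \mu_0+t\mu_1$ and the equivalence $\Phi_t=\mathrm{id}+t\Phi_1$ are mutually inverse at the level of equivalence classes.
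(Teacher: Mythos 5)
Your proposal is correct and follows essentially the same route as the paper, which simply states that the result follows from Propositions \ref{-s8} and \ref{s17}; you have merely spelled out the bijection (well-definedness on classes, the inverse $\mu_1\mapsto \mu_0+t\mu_1$, and the truncation of the deformation and equivalence equations modulo $t^2$) that the paper leaves implicit.
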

\begin{proof}
The proof follows from Propositions \ref{-s8} and  \ref{s17}.
\end{proof}

\section{Deformations of Jacobi-Jordan algebra homomorphisms}\label{Sec6}

Let $\mathfrak  J$ be a Jacobi-Jordan algebra, $A$ be a Jacobi-Jordan admissible  algebra and $\varphi:{\mathfrak  J}\longrightarrow A$ be a Jacobi-Jordan homomorphism, where $A$ is endowed by the Jacobi-Jordan product $\{.,.\}$ induced by the anti-commutator. We consider $A$ as a $\mathfrak  J$-module with the action defined by $x.a:=-L_{\varphi(x)}a=-\{\varphi(x),a\}$, where $L$ is  the left multiplication of $(A,\{.~,.\})$.

A formal deformation of $\varphi$ is a formal power series 
\begin{eqnarray}\label{eq--0}
\Phi_t=\sum_{r\geq 0}t^r\Phi_r, \mbox{ where } \Phi_0=\varphi ,
\end{eqnarray}  and $\Phi_r \; (r\geq 1)$  are  linear maps from $\mathfrak  J$ into $A[[t]]$, a formal space with coefficients in $A$,  such that for all $x$ and $y$ in $\mathfrak  J$, we have: 
\begin{eqnarray}\label{eq1} \Phi_t(xy)=\{\Phi_t(x),\Phi_t(y)\}.
\end{eqnarray}The product of the right hand side  is the anti-commutator extended to formal power series by $\mathbb{K}[[t]]$-bilinearity. The identity $(\ref{eq1})$ is equivalent to an infinite system of equation of the form:
\begin{eqnarray}\label{eq2}\Phi_n(xy)=\sum_{k=0}^n\{\Phi_k(x),\Phi_{n-k}(y)\},\mbox{ for all }x,y\in\mathfrak  J\mbox{ (Deformation equation of order $n$,  $n\in \mathbb{N}$)}.
\end{eqnarray}
Notice that $n=0$ corresponds to $\Phi_0=\varphi$ being a Jacobi-Jordan homomorphism.
\begin{remark}
 The deformation equation (\ref{eq2}) of order  $n\geq1$ can be written as follow:
$$d^1\Phi_n(xy)=-\sum_{k=1}^{n-1}\{\Phi_k(x),\Phi_{n-k}(y)\},$$
where $d^1\Phi_n(xy)=\{\varphi(x),\Phi_{n}(y)\}+\{\Phi_{n}(x),\varphi(y)\}-\Phi_n(x y)$. 
In particular $\Phi_1$ is a $1$-cocycle.
\end{remark}
\begin{definition} Two formal deformations  $\Phi_t$ and $\Phi'_t$ of $\varphi$ are
equivalent  if there exists an automorphism  
$f_t: A [[t]]\longrightarrow A [[t]]$ of the form
$$f_t=exp(tL_{\varphi(a_1)}+t^2L_{\varphi(a_2)}+....)=id+tL_{\varphi(a_1)}+t^2(L^2_{\varphi(a_1)}/2+L_{\varphi(a_2)})+....$$
where $a_i\in A$, such that $\Phi'_t=f_t\Phi_t$ and $L $ is the left multiplication of $(A,\{.~,.\})$.
\end{definition}

Recall that a deformation of a Jacobi-Jordan algebra homomorphism is said infinitesimal if it is of  of the form $\Phi_t(x,y)=\Phi_0(x,y)+t\Phi_1(x,y), \text{ for all }x,y \in\mathfrak  J $.

\begin{theorem}
Infinitesimal deformations of a Jacobi-Jordan  algebra
homomorphism from $\mathfrak  J$  into $A$ are classified by the first cohomology group $~_zH^1({\mathfrak  J},A).$
\end{theorem}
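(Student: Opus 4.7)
The plan is to set up a bijection between infinitesimal deformations of $\varphi$ modulo equivalence and $~_zH^1({\mathfrak J},A) = ~_zZ^1({\mathfrak J},A)/~_zB^1({\mathfrak J},A)$ by showing separately that (a) infinitesimal deformations correspond to $1$-cocycles and (b) the equivalence of infinitesimal deformations coincides with the $1$-coboundary relation on the associated cocycles.

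For (a), writing $\Phi_t = \varphi + t\Phi_1$ and expanding the deformation constraint (\ref{eq1}) modulo $t^2$, the constant term recovers the homomorphism property of $\varphi$ (which is assumed), while the coefficient of $t$ gives
$$\Phi_1(xy) = \{\varphi(x),\Phi_1(y)\} + \{\Phi_1(x),\varphi(y)\}.$$
Because $A$ is a $\mathfrak{J}$-module with action $\pi(x)a = -\{\varphi(x),a\}$, this is exactly the antiderivation condition $\Phi_1(xy) = -\pi(x)\Phi_1(y) - \pi(y)\Phi_1(x)$, i.e.\ $d^1\Phi_1 = 0$ as already noted in the remark preceding the theorem. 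Thus $\Phi_1 \in ~_zZ^1({\mathfrak J},A)$; conversely, any such 1-cocycle yields an infinitesimal deformation since the higher-order equations (\ref{eq2}) are vacuous when $\Phi_n = 0$ for $n \geq 2$.

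For (b), consider two infinitesimal deformations $\Phi_t, \Phi'_t$ related by an equivalence $f_t = id + tL_{a} + O(t^2)$ for some $a \in A$ (only the linear piece of the formal exponential contributes at the infinitesimal level). Expanding $\Phi'_t = f_t\Phi_t$ modulo $t^2$ produces
$$\Phi'_1(x) - \Phi_1(x) \;=\; L_a\varphi(x) \;=\; \{a,\varphi(x)\} \;=\; -\pi(x)a \;=\; -d^0(a)(x),$$
so $\Phi'_1 - \Phi_1 \in ~_zB^1({\mathfrak J},A)$; conversely, every coboundary $d^0(a)$ is realized by an equivalence of this form via a suitable choice of $a$.

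Combining the two directions and passing to equivalence classes yields the announced classification by $~_zH^1({\mathfrak J},A)$. The main technical obstacle will be the careful bookkeeping of signs through the chain of identifications: the minus sign in $\pi(x)a = -\{\varphi(x),a\}$, the definition $d^0(a)(x) = \pi(x)a$, and the definition of $d^1$ in the zigzag complex must all line up so that the $t$-linear term of (\ref{eq1}) is literally $d^1\Phi_1$ and the $t$-linear comparison in the equivalence relation is literally $d^0(a)$. A minor subtlety is that the formula $f_t = \exp(tL_{\varphi(a_1)} + \dots)$ should be read so that the linear term $L$ ranges over $L_a$ for $a \in A$, ensuring that all coboundaries $d^0(a)$ for $a \in A$ are realized by some equivalence; otherwise only a subspace of $~_zB^1({\mathfrak J},A)$ would be captured and the bijection would degenerate to a surjection onto a quotient.
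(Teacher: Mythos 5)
Your proof follows essentially the same route as the paper's, which is only a three-line sketch asserting exactly the two facts you verify in detail: that the $t$-linear term of the deformation equation is the cocycle condition $d^1\Phi_1=0$ for the module structure $\pi(x)a=-\{\varphi(x),a\}$, and that an equivalence $f_t=id+tL_a+O(t^2)$ shifts the first-order term by an element of $~_zB^1({\mathfrak J},A)$. Your sign bookkeeping is correct, and your remark that the linear term of $f_t$ must be allowed to range over $L_a$ for all $a\in A$ (so that every coboundary $x\mapsto\pi(x)m$ is realized, not just those with $m\in\varphi({\mathfrak J})$) is a genuine point that the paper's definition of equivalence glosses over.

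One justification is wrong as stated: the higher-order equations are \emph{not} vacuous when $\Phi_n=0$ for $n\geq 2$, since the order-two equation then reads $0=\{\Phi_1(x),\Phi_1(y)\}$, which an arbitrary $1$-cocycle need not satisfy. The converse direction should instead invoke the convention, used explicitly by the paper for algebra deformations (``over $K[[t]]/t^2$''), that an infinitesimal deformation lives over $\mathbb{K}[t]/(t^2)$, so only the equations of order $0$ and $1$ are imposed; with that reading every $1$-cocycle does define an infinitesimal deformation and your bijection stands.
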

\begin{proof}
The first order terms $\Phi_1$ in $(\ref{eq--0})$ are 1-cocycles.
Consider two infinitesimal  formal deformations  $\Phi$ and $\Phi'$ of $\varphi$. 
They are equivalent if and only if the corresponding cocycles are
cohomologous.
Conversely, given a Jacobi-Jordan  algebra homomorphism $\varphi : \mathfrak  J\longrightarrow A$, an arbitrary 1-cocycle $\Phi_1\in ~_zZ^1(\mathfrak  J; A) $ defines an infinitesimal deformation of $\varphi$.
\end{proof}

In a forthcoming paper, we will extend the zigzag cohomology to  provide a cohomology that controls  simultaneous deformations of Jacobi-Jordan algebra and Jacobi-Jordan algebra homomorphism.

\section{Example : The space $~_zH_s^2({\mathfrak  J}_4,{\mathfrak  J}_4)$ and Deformations}\label{Sec7}
We consider  the $4$-dimensional Jacobi-Jordan algebra  $({\mathfrak  J}_4,\cdot)$ presented in Example \ref{ex2}. We aim in this section to compute its second zigzag cohomology group $~_zH_s^2({\mathfrak  J}_4,{\mathfrak  J}_4)$ and consider some deformations. In particular, we provide a  nonisomorphic deformation of  ${\mathfrak  J}_4$.
	
	We first describe the second zigzag cohomology group.
	
		\begin{proposition} 
	Let $({\mathfrak  J}_4,\cdot)$ be the $4$-dimensional Jacobi-Jordan algebra defined in Example \ref{ex2}. Then,
	 $\dim ~_zZ_s^2({\mathfrak  J}_4,{\mathfrak  J}_4)=13$ and  $~_zZ_s^2({\mathfrak  J}_4,{\mathfrak  J}_4)$ is generated, with respect to the basis $\{e_1,...,e_{4}\}$, by   $\{c_1,...c_{13}\}$, where the nonzero terms are defined as 
	 \begin{eqnarray*}
	   & c_{1}(e_3,e_3)=e_2,
	   & c_2(e_1,e_3)=e_1,~ c_2(e_2,e_3)=-2e_2,\\
	   & c_3(e_1,e_3)=e_3, ~c_3(e_2,e_3)=-2e_4,
	 & c_4(e_1,e_4)=e_2,~ c_4(e_2,e_3)=-2e_2,\\
	 & c_{5}(e_1,e_4)=e_4,~ c_{5}(e_2,e_3)=-2e_4, &  c_{6}(e_3,e_3)=2e_3, c_{6}(e_3,e_4)=-e_4, \\ & c_7(e_1,e_1)=e_1,~c_7(e_1,e_2)=-e_2,  & 
	   c_8(e_1,e_1)=e_2,
	    \\ & c_9(e_1,e_1)=e_3 , c_9(e_1,e_2)=-e_4, &
	    c_{10}(e_1,e_1)=e_4,\\ &  c_{11}(e_1,e_3)=e_2, & c_{12}(e_1,e_3)=e_4,\\ &
	    c_{13}(e_3,e_3)=e_4. & \ 
	 \end{eqnarray*}

	\end{proposition}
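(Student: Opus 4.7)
The approach is a direct linear-algebra computation, so the bulk of the work is organizing the cocycle equation in a tractable form. I parameterize any symmetric bilinear cochain $c : {\mathfrak J}_4 \times {\mathfrak J}_4 \to {\mathfrak J}_4$ by its values on the unordered basis pairs, writing $c(e_i, e_j) = \sum_{k=1}^4 \alpha_{ij}^k e_k$ with $\alpha_{ij}^k = \alpha_{ji}^k$; this gives $\binom{4+1}{2}\cdot 4 = 40$ scalar unknowns. With the adjoint representation $\pi(x) = L_x$, the cocycle condition reads
\[
d^2 c(e_i, e_j, e_k) = e_i\, c(e_j, e_k) + e_j\, c(e_i, e_k) + e_k\, c(e_i, e_j) + c(e_i e_j, e_k) + c(e_i e_k, e_j) + c(e_j e_k, e_i) = 0.
\]
A short verification, using the symmetry of $c$ and the fact that both sums in the definition of $d^2$ range over unordered data, shows that $d^2 c$ is symmetric in its three arguments. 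It therefore suffices to impose the equation on the $\binom{4+2}{3} = 20$ multisets $i \leq j \leq k$ in $\{1,\dots,4\}$, yielding at most $80$ scalar equations on the $40$ unknowns.

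The crucial simplification comes from the sparseness of the ${\mathfrak J}_4$-product: only $e_1\cdot e_1 = e_2$ and $e_1\cdot e_3 = e_4$ are nonzero, so the only nontrivial left multiplications on basis vectors are $L_{e_1}(e_1) = e_2$, $L_{e_1}(e_3) = e_4$ and $L_{e_3}(e_1) = e_4$. Consequently each of the six terms in $d^2 c(e_i, e_j, e_k)$ can fire only when some of the indices equal $1$ or $3$ and the remaining indices combine to give a nonzero product; multisets supported on $\{2, 4\}$ produce the trivial equation $0 = 0$. I would organize the bookkeeping by the output coordinate $k \in \{1,2,3,4\}$ and by the multiplicity pattern of the input multiset, reducing the analysis to a short list of cases driven by occurrences of $e_1$ and $e_3$.

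Row-reducing the resulting system should produce a kernel of dimension $13$, i.e.\ rank $27$, and I would then exhibit $c_1,\ldots,c_{13}$ as an explicit basis of this kernel. Each $c_n$ is symmetric by inspection and supported on at most two basis pairs, so checking $d^2 c_n = 0$ amounts to a short case analysis of the finitely many triples where the support interacts with the ${\mathfrak J}_4$-product. Linear independence is immediate: the nonzero coefficients $\alpha_{ij}^k$ of the thirteen maps occupy disjoint or otherwise visibly independent positions in the $40$-dimensional coefficient tensor. The main obstacle is purely bookkeeping rather than conceptual---tracking roughly eighty scalar equations without sign or index error---so in practice the cleanest presentation either blocks the equations by input type as described above or offloads the final row reduction to a brief computer-algebra verification that confirms both the dimension count $40 - 27 = 13$ and the given basis.
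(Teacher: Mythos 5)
Your proposal is correct and follows essentially the same route as the paper: the paper likewise writes $c(e_i,e_j)=\sum_l\alpha_{i,j}^l e_l$, imposes the symmetric cocycle condition on basis triples, exploits the sparseness of the product (in particular that $e_2$ and $e_4$ lie in the annihilator, so most triples give trivial equations), and solves the resulting linear system by hand to find the $13$ free parameters realized by $c_1,\dots,c_{13}$. The only difference is presentational: the paper carries out the case-by-case elimination explicitly and records the general solution $c(x,y)$ in closed form, whereas you defer the final rank computation to row reduction or computer algebra.
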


	\begin{proof}
A symmetric 2-cochain $c$ belongs to $~_zZ_s^2({\mathfrak  J}_4,{\mathfrak  J}_4)$ if and only if $c$ satisfies $d^2c=0$ which may be expressed with respect to the basis by the following conditions
\begin{align}\label{s8}
    c(e_ie_j,e_k)+c(e_je_k,e_i)+c(e_ke_i,e_j)+e_ic(e_j,e_k)+e_jc(e_k,e_i)+e_kc(e_i,e_j)=0,
\end{align} where  $i,j,k\in\{1,2,3,4\}.$
In the sequel, we  suppose that 
$c(e_i,e_j)=\sum_{l=1}^4 \alpha_{i,j}^l e_l,\text{ for }i,j\in\{1,2,3,4\}.$
From the  identity (\ref{s8}), we get the following equations on the coefficient $\alpha_{i,j}^l :$ For $(i,j,k)=(1,1,1)$ and $(i,j,k)=(1,1,2)$, we obtain respectively:
	\begin{align}
	   & \alpha_{1,2}^1= \alpha_{1,2}^3=0,\,\,\,\, \alpha_{1,2}^2= -\alpha_{1,1}^1\text{ and } \alpha_{1,2}^4=- \alpha_{1,1}^3.\label{alpha1}\\
	   & \alpha_{2,2}^1=\alpha_{2,2}^3=0,\alpha_{2,2}^2= -2\alpha_{1,2}^1, \alpha_{2,2}^4=-2\alpha_{1,2}^3,
	      \end{align} 
which implies that $\alpha_{2,2}^l=0.$
 For  $(i,j,k)=(1,1,3)$ and  $(i,j,k)=(1,1,4)$, we obtain respectively: 
\begin{align}
&\alpha_{2,3}^1+2\alpha_{1,4}^1=0,~
\alpha_{2,3}^2+2\alpha_{1,4}^2+2\alpha_{1,3}^1=0,~
\alpha_{2,3}^3+2\alpha_{1,4}^3=0,\text{ and }
\alpha_{2,3}^4+2\alpha_{1,4}^4+2\alpha_{1,3}^3+\alpha_{1,1}^1=0.\label{alpha2}\\
 &\alpha_{2,4}^1=\alpha_{2,4}^3=0,\,\,\,\alpha_{2,4}^2+2\alpha_{1,4}^1=0\text{ and } \alpha_{2,4}^4+2\alpha_{1,4}^3=0.\label{alpha3}
\end{align}
\begin{align*}
&\text{ For } (i,j,k)=(1,2,3), \text{ we get }\alpha_{2,4}^1=\alpha_{2,4}^3=0,~\alpha_{2,4}^2+\alpha_{2,3}^1=0 \text{ and } \alpha_{2,4}^4+\alpha_{2,3}^3+\alpha_{1,2}^1=0. \\
&\text{ By } (\ref{alpha1})\text{  and } (\ref{alpha2}),\text{  we have } \alpha_{2,3}^3=-2\alpha_{1,4}^3\text{ and } \alpha_{1,2}^1=0.\text{ Thus, } \alpha_{2,4}^4=-2\alpha_{1,4}^3.
\end{align*}
For $(i,j,k)=(1,3,3),(1,3,4),(2,3,3)$, we obtain the  following equations:
\begin{align}
   &\alpha_{3,4}^1=\alpha_{3,4}^3=0,\,\,\, 2\alpha_{3,4}^2+\alpha_{3,3}^1=0 \text{ and }2\alpha_{3,4}^4+2\alpha_{1,3}^1+\alpha_{3,3}^3=0. \label{alpha4}\\
   &\alpha_{4,4}^1=\alpha_{4,4}^3=0,  ~\alpha_{4,4}^2+\alpha_{3,4}^1=0 \text{ and }\alpha_{4,4}^4+\alpha_{3,4}^3+\alpha_{1,4}^1=0.\\
   &\alpha_{2,3}^1=0.
\end{align}
The equation  (\ref{alpha2}) entails that $\alpha_{1,4}^1=0$  and the equations (\ref{alpha3}) and (\ref{alpha4}) entails that $\alpha_{2,4}^2=0$ and $\alpha_{4,4}^4=0$.
$\text{ For } i=j=k=3, \text{ we get }e_3c(e_3,e_3)=0. \text{ which implies } \alpha_{3,3}^1=0.
$
The other cases come from the conditions above and from the fact that $e_2$ and $e_4$ belong to the annihilator of $\mathfrak  J_4$ without other restriction on $\alpha_{i,j}^l$.

Therefore for all $x=x_1e_1+x_2e_2+x_3e_3+x_4e_4$ and $y=y_1e_1+y_2e_2+y_3e_3+y_4e_4$ in ${\mathfrak  J}_4$, we have:
\begin{align*}
c(x,y)=&\alpha_{1,1}^1\Bigl(x_1y_1e_1-(x_1y_2+x_2y_1)e_2-(x_2y_3+x_3y_2)e_4\Bigl)
+\alpha_{1,1}^2\Bigl(x_1y_1e_2\Bigl)\\
&+\alpha_{1,1}^3\Bigl(x_1y_1e_3-(x_1y_2+x_2y_1)e_4\Bigl))+\alpha_{1,1}^4\Bigl((x_1y_1)e_4\Bigl)\\
&+\alpha_{1,3}^1\Bigl((x_1y_3+x_3y_1)e_1-2(x_2y_3+x_3y_2)e_2-(x_3y_4+x_4y_3)e_4\Bigl)\\
&+\alpha_{1,3}^2\Bigl((x_1y_3+x_3y_1)e_2\Bigl)+\alpha_{1,3}^3\Bigl((x_1y_3+x_3y_1)e_3-2(x_2y_3+x_3y_2)e_4\Bigl)\\
&+\alpha_{1,3}^4\Bigl((x_1y_3+x_3y_1)e_4\Bigl)+\alpha_{1,4}^2\Bigl((x_1y_4+x_4y_1-2x_2y_3-2x_3y_2)e_2\Bigl)\\
&+\alpha_{1,4}^4\Bigl((x_1y_4+x_4y_1-2x_2y_3-2x_3y_2)e_4\Bigl)+\alpha_{3,3}^2\Bigl(x_3y_3e_2\Bigl)\\
&+
\alpha_{3,3}^3\Bigl((2x_3y_3 e_3-(x_3y_4+x_4y_3)e_4\Bigl)+\alpha_{3,3}^4\Bigl(x_3y_3e_4\Bigl)
\end{align*}
which  entails that $\{c_1,c_2,...c_{13}\}$ is a generating subset of $~_zZ_s^2({\mathfrak  J}_4,{\mathfrak  J}_4)$. Moreover, it is clear that   $\{c_1,c_2,...c_{13}\}$ is a free family of cocycles.
	\end{proof}

Let  us now compute the $2$-coboundaries of the family $\{c_1,c_2,...c_{13}\}$ in order to determine the dimension and a basis of $~_zH_s^2({\mathfrak  J}_4,{\mathfrak  J}_4)$. It is
 stated in the following   main theorem of this section.

\begin{theorem}
$\dim\Bigl( ~_zH_s^2({\mathfrak  J}_4,{\mathfrak  J}_4)\Bigl)=6$ and $~_zH_s^2({\mathfrak  J}_4,{\mathfrak  J}_4)=span\{c_1,...,c_{6}\}.$
\end{theorem}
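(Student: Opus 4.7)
The plan is to compute the coboundary space $\,{}_zB^2({\mathfrak J}_4, {\mathfrak J}_4) = \mathrm{Im}(\delta^1)$ inside the $13$-dimensional cocycle space $\,{}_zZ_s^2({\mathfrak J}_4, {\mathfrak J}_4) = \mathrm{span}\{c_1, \dots, c_{13}\}$ furnished by the preceding proposition, and then read off the quotient. Recall that $\delta^1 \colon L^1({\mathfrak J}_4, {\mathfrak J}_4) \to \,{}_zZ_s^2({\mathfrak J}_4, {\mathfrak J}_4)$ is given by $\delta^1 f(x, y) = L_x f(y) + L_y f(x) - f(xy)$, and its kernel is the space of derivations of ${\mathfrak J}_4$, so a rank computation already pins down $\dim\,{}_zB^2 = 16 - \dim \mathrm{Der}({\mathfrak J}_4)$; the structure is, however, best extracted by a direct coordinate calculation.

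First I would write a generic endomorphism $f$ as a $4 \times 4$ matrix $(f_{ij})$ with $f(e_j) = \sum_i f_{ij} e_i$ and evaluate $\delta^1 f(e_i, e_j)$ on each of the ten unordered basis pairs, using the multiplication table (only $e_1^2 = e_2$ and $e_1 e_3 = e_4$ contribute). This expresses each of the $13$ cocycle coordinates $\alpha_{i,j}^k$ of $\delta^1 f$ as an explicit linear form in the $f_{ij}$; for instance $\delta^1 f(e_1, e_1) = -f_{12} e_1 + (2 f_{11} - f_{22}) e_2 - f_{32} e_3 + (2 f_{31} - f_{42}) e_4$ and $\delta^1 f(e_3, e_3) = 2 f_{13} e_4$.

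Next I would realize $c_7, \dots, c_{13}$ individually as coboundaries: the assignments $f(e_2) = -e_1,\,-e_2,\,-e_3,\,-e_4$ (with $f$ zero elsewhere) yield $\delta^1 f = c_7, c_8, c_9, c_{10}$ respectively; the assignments $f(e_4) = -e_2$ and $f(e_4) = -e_4$ yield $c_{11}$ and $c_{12}$; and the combination $f(e_3) = \tfrac{1}{2} e_1$, $f(e_4) = \tfrac{1}{2} e_2$ yields $c_{13}$. Each identification is an immediate verification against the formulas from the previous step.

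Finally I would show that $\{[c_1], \dots, [c_6]\}$ is linearly independent in the quotient, completing the count $\dim\,{}_zH_s^2 = 13 - 7 = 6$. The main obstacle lies precisely in this independence step: from the explicit formulas for $\delta^1 f$ one extracts the linear constraints that every coboundary satisfies on the $13$ cocycle coordinates (in particular $\alpha_{3,3}^2 = 0$, $\alpha_{3,3}^3 = 0$, together with the relations pairing $\alpha_{1,3}^1$ with $\alpha_{1,4}^2$ and $\alpha_{1,3}^3$ with $\alpha_{1,4}^4$), and then verify that no nontrivial linear combination $\sum_{i=1}^6 \lambda_i c_i$ satisfies all of these constraints simultaneously. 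This bookkeeping, together with the explicit coboundaries just exhibited, yields the stated basis and the claimed dimension.
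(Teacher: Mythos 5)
Your plan is the same as the paper's (exhibit $c_7,\dots,c_{13}$ as explicit coboundaries, then test linear independence of the remaining six classes), and your coboundary assignments for $c_7,\dots,c_{13}$ are correct --- you even silently repair the paper's typo $f_{10}(e_2)=-e_1$, which must be $f_{10}(e_2)=-e_4$. The problem is the final independence step, which you correctly single out as the main obstacle: it cannot be carried out, and your own opening remark already shows why. Since $\ker\delta^1=\mathrm{Der}({\mathfrak J}_4)$ and the paper itself computes $\dim \mathrm{Der}({\mathfrak J}_4)=7$ in Section 4, your rank formula gives $\dim\,{}_zB^2({\mathfrak J}_4,{\mathfrak J}_4)=16-7=9$, hence $\dim\,{}_zH_s^2({\mathfrak J}_4,{\mathfrak J}_4)=13-9=4$, not $6$. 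Concretely, your formulas $\delta^1 f(e_1,e_3)=-f_{14}e_1+(f_{13}-f_{24})e_2-f_{34}e_3+(f_{11}+f_{33}-f_{44})e_4$ and $\delta^1 f(e_1,e_4)=f_{14}e_2+f_{34}e_4$ show that the only linear constraints cutting out $\mathrm{Im}(\delta^1)$ among the $13$ cocycle coordinates are $\alpha_{3,3}^2=\alpha_{3,3}^3=0$, $\alpha_{1,3}^1+\alpha_{1,4}^2=0$ and $\alpha_{1,3}^3+\alpha_{1,4}^4=0$. The combinations $c_2-c_4$ and $c_3-c_5$ satisfy all of them, and they really are coboundaries: taking $f(e_4)=-e_1$ and $f=0$ on $e_1,e_2,e_3$ gives $\delta^1 f=c_2-c_4$ (note the cocycle condition forces $c_2(e_3,e_4)=-e_4$, a term omitted from the proposition's list of nonzero entries but present in its closed formula), and $g(e_4)=-e_3$ gives $\delta^1 g=c_3-c_5$. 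So $[c_2]=[c_4]$ and $[c_3]=[c_5]$, the six classes are not independent, and the correct output of your computation is $\dim\,{}_zH_s^2({\mathfrak J}_4,{\mathfrak J}_4)=4$ with basis $\{[c_1],[c_2],[c_3],[c_6]\}$.

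You should know that the paper's own proof founders on exactly this point. In its step ``For $i=1$, $j=3$'' it records $\alpha_2=a_{14}$ and $\alpha_3=a_{34}$, overlooking the term $-f(e_1e_3)=-f(e_4)$ in $\delta^1 f(e_1,e_3)$; the correct relations are $\alpha_2=-a_{14}$ and $\alpha_3=-a_{34}$, which combined with the relations from $(i,j)=(1,4)$ merely reproduce $\alpha_2+\alpha_4=0$ and $\alpha_3+\alpha_5=0$ rather than forcing $\alpha_2=\alpha_4=\alpha_3=\alpha_5=0$. So the gap is not a missing idea in your outline but the fact that the bookkeeping, done with the correct signs, contradicts the statement being proved; if you execute your plan you should report dimension $4$.
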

\begin{proof} First, we prove that $c_7,...,c_{13}$ are  coboundaries.
Recall that a cocycle $c\in ~_zZ_s^2({\mathfrak  J}_4,{\mathfrak  J}_4) $ is a coboundary in case there exists a
$1$-cochain $f$ such that:
$$c(e_i, e_j ) =e_if(e_j)+e_jf(e_i)- f(e_ie_j),\text{ for all }i\in\{1,2,3,4\}.$$
Using this equation, we fill in the table of cochains and corresponding  coboundaries:
\vskip 0.5cm

 \begin{tabular}{|l|*{2}{c|}}
    \hline
     cochain  &coboundary   \\
    \hline
     $f_7(e_2)=-e_1,\,\,f_7(e_1)=f_7(e_3)=f_7(e_4)=0$&$ \delta^1f_7=c_7 $ \\
    \hline
     $f_8(e_2)=-e_2,\,\,f_8(e_1)=f_8(e_3)=f_8(e_4)=0 $& $\delta^1f_8=c_8$ \\ 
    \hline
      $f_9(e_2)=-e_3,\,\,f_9(e_1)=f_9(e_3)=f_9(e_4)=0 $& $\delta^1 f_9=c_9$ \\ 
    \hline
      $f_{10}(e_2)=-e_{1},\,\,f_{10}(e_1)=f_{10}(e_3)=f_{10}(e_4)=0$ &$ \delta^1f_{10}=c_{10} $\\ 
    \hline
     $f_{11}(e_4)=-e_2 \,\,f_{11}(e_1)=f_{11}(e_2)=f_{11}(e_3)=0$&$\delta^1f_{11}= c_{11}$ \\
    \hline
   $ f_{12}(e_4)=-e_4,\,\,f_{12}(e_1)=f_{12}(e_2)=f_{12}(e_3)=0  $ &$ \delta^1f_{12}=c_{12} $ \\
    \hline
     $ f_{13}(e_3)=1/2~e_1,\,\,f_{13}(e_4)=1/2~e_2,\,\,f_{13}(e_1)=f_{13}(e_2)    =0  $ & $\delta^1f_{13}=c_{13} $ \\
    \hline
\end{tabular}
\vskip 0.5cm
Now, we explain how $c_1,...,c_{6}$ are not trivial, case by case, cocycle by cocycle.

For $i=1,...,6$, an eventual  corresponding cochain of $c_{i}$ must satisfy respectively the following equation $Eq_i$:
\begin{align*}
&Eq_1:2e_3f(e_3)=e2, ~~Eq_2:e_3f(e_2)=-2e_2, Eq_3:e_3f(e_2)=-2e_4 \text{ and }
2e_1f(e_1)-f(e_2)=0,\\
&Eq_4:e_3f(e_2)=-2e_2,~~Eq_5:e_1f(e_2)=-2e_4\text{ and }2e_1f(e_1)-f(e_2)=0,~~Eq_6:e_3f(e_3)=e_3.
\end{align*}
By easy computation, we can verify that each equation $Eq_i$ has no solution.
 Thus, $c_{1},...,c_6$ are not trivial.
 Now, we prove that  the cohomology classes of  $c_1,...,c_6$ are linearly independent.  Let $\alpha_1,...,\alpha_6$ be  scalars and $f$ be an endomorphism of $\mathfrak J_4$ such that 
 \begin{align}\label{comb}
(\alpha_1c_1+...+\alpha_6c_6)(e_i,e_j)=\delta^1f(e_i,e_j),
 \end{align}
  for all $i,j\in\{1,...,4\}$.\\ Let $M=\Bigl(a_{ij}\Bigl)_{1\leq i,j\leq 4}$ be the matrix of $f$ with respect to  the basis $\{e_1,...,e_4\}$.

 For $i=j=3$,  Equation (\ref{comb}) holds if and only if
   $\alpha_1=\alpha_6=0$.
   
  For $i=2,~j=3$,  Equation (\ref{comb})  implies that  $\alpha_2+\alpha_4=0$ and $\alpha_3+\alpha_5=-\frac{1}{2}a_{12}$.
   For $i=1,~j=4$,  Equation (\ref{comb})   implies that  $\alpha_4=a_{14}$ and $\alpha_5=a_{34}$.
    For $i=1,~j=3$,  Equation (\ref{comb})   implies that  $\alpha_2=a_{14}$ and $\alpha_3=a_{34}$. Thus, $\alpha_2=\alpha_4=0$. Moreover,
       For $i=1,~j=2$,  Equation (\ref{comb})   implies that  $a_{12}=0$. Thus,  $\alpha_3=\alpha_5=0$. 
 
\end{proof}
\begin{Example}[Deformation of $\mathfrak  J_4$] We aim to a make a use of the cocycles to construct a deformation of $\mathfrak  J_4$.  
       Consider  the first  cocycle $c_1$ defined  above where the non-zero term is
       $ c_{1}(e_3,e_3)=e_2$.  Since 
       $\sum_{cyc\{x,y,z\}}c_1(c_1(x,y),z)=0,$ then we can consider
        the formal deformation of the Jacobi-Jordan algebra $\mathfrak  J_4$ defined  by 
       $x\star_ty:=xy+tc_1(x,y).$
       The new product $\star_t$ becomes defined by:
       $e_1\star_te_1=e_2,~~e_1\star_te_3=e_3\star_te_1=e_4,~~e_3\star_te_3=te_2, \text{ otherwise } e_i\star_te_j=0.$ It defines a new Jacobi-Jordan algebra. 
       If we consider the new basis
       $u_1:= e_3-\sqrt{t}e_1,~~u_2:=(t^2+t)e_2-2\sqrt{t}e_4,~~u_3:= e_3+\sqrt{t}e_1,~~u_4:=(t^2+t)e_2+2\sqrt{t}e_4.$
       The non zero terms of the product $\star_t$ are:
       $u_1u_1=u_2,~~u_3u_3=u_4\text{ otherwise } u_i\star_tu_j=0.$
       Thus  the deformed algebra  $(\mathfrak  J_4,\star_t)$ is isomorphic to the algebra $A_{1,2}\oplus A_{1,2}$ of Proposition 3.4 in \cite{burde}.
       
\end{Example}

\section*{ACKNOWLEDGEMENTS}
The work is funded by grant number 15-MAT5345-10 from the
National Science, Technology and Innovation Plan (MAARIFAH), the King Abdul-Aziz City for Science and Technology (KACST), Kingdom of Saudi Arabia. We thank the Science and Technology Unit at
Umm Al-Qura University for their continued logistics support. 

\end{document}